\documentclass[reqno,11pt]{amsart}

\usepackage{verbatim}
\usepackage{color}
\usepackage{graphicx}
\usepackage[latin1]{inputenc}
\usepackage{vmargin}
\usepackage{amssymb}
\usepackage{boldline}
\usepackage{float}
\usepackage{algorithmic}
\usepackage{algorithm}
\usepackage{subfigure}
\usepackage{mathrsfs}
\usepackage{wrapfig}
\usepackage{setspace}
\usepackage{stmaryrd}
\usepackage{multirow}
\usepackage{rotating}
\usepackage{bm}

\oddsidemargin=1.2in
\evensidemargin=1.2in

\textheight=9.in


\DeclareGraphicsRule{.pdftex}{pdf}{*}{}
\graphicspath{{plots/}}

\renewcommand{\(}{\left(}
\renewcommand{\)}{\right)}
\renewcommand{\[}{\left[}
\renewcommand{\]}{\right]}

\newcommand{\dnorm}[2]{\left\| #1 \right\|_{#2}}

\renewcommand{\dfrac}[2]{\displaystyle{\frac{#1}{#2}}}
\newcommand{\dint}[2]{\displaystyle{\int_{#1}^{#2}}}
\newcommand{\dcup}[2]{\displaystyle{\bigcup_{#1}^{#2}}}

\newcommand{\dsum}[2]{\displaystyle{\sum_{#1}^{#2}}\,}
\newcommand{\dmax}[1]{\displaystyle{\max_{#1}}}

\newcommand{\supp}{\text{supp}}
\newcommand{\qspan}{\text{span}}

\newcommand{\qa}[3]{a_{#1}\( #2,\, #3\)}
\newcommand{\qaD}[3]{a^D_{#1}\( #2,\, #3\)}
\newcommand{\qaS}[3]{a^S_{#1}\( #2,\, #3\)}
\newcommand{\qaB}[3]{a^B_{#1}\( #2,\, #3\)}
\newcommand{\qaSE}[3]{a^{SE}_{#1}\( #2,\, #3\)}

\newcommand{\qmsum}[3]{m_{#1}\( #2,\, #3\)}
\newcommand{\qmsumSE}[3]{m^{SE}_{#1}\( #2,\, #3\)}
\newcommand{\qmsumtSE}[3]{\widetilde{m}^{SE}_{#1}\( #2,\, #3\)}
\newcommand{\qmsumD}[3]{m^D_{#1}\( #2,\, #3\)}

\newcommand{\qmsumS}[3]{m^S_{#1}\( #2,\, #3\)}
\newcommand{\qmsumB}[3]{m^B_{#1}\( #2,\, #3\)}
\newcommand{\Hess}[1]{\mathcal{H}(#1)}

\definecolor{gray}{rgb}{0.6,0.6,0.6}


\theoremstyle{plain}
\newtheorem{theorem}{Theorem}
\newtheorem{lemma}[theorem]{Lemma}

\theoremstyle{definition}

\theoremstyle{remark}
\newtheorem{remark}[theorem]{Remark}

\makeatletter \@addtoreset{equation}{section} \makeatother
\makeatletter \@addtoreset{theorem}{section} \makeatother
\makeatletter \@addtoreset{figure}{section} \makeatother
\makeatletter \@addtoreset{table}{section} \makeatother

\makeatletter \@namedef{subjclassname@2010}{
\textup{2010} Mathematics Subject Classification} \makeatother



\def \qAft{($\widetilde{\text{A}}$)}
\def \qn{n}
\def \qnI{n_I}

\def \qnbx{n_{\bm{x}}}
\def \ql{l}
\def \qL{L}

\def \qLt{\widetilde{L}}

\def \qkappa{\kappa}
\def \qkappainv{\kappa^{-1}}
\def \qkappamax{\kappa_{\text{max}}}
\def \qkappamin{\kappa_{\text{min}}}

\def \qmu{\mu}
\def \laplace{\Delta}

\def \qbe{\bm{e}}

\def \qbu{\bm{u}}

\def \qu{u}
\def \qphi{\phi}
\def \qpsi{\psi}

\def \qvarphi{\varphi}
\def \qlambda{\lambda}
\def \qlambdat{\widetilde{\lambda}}

\def \qbv{\bm{v}}
\def \qv{v}

\def \qp{p}

\def \qq{q}
\def \qi{i}
\def \qI{I}
\def \qk{k}

\def \qj{j}

\def \qepsilon{\epsilon}

\def \qVscr{\mathscr{V}}
\def \qVscrcSE{\mathscr{V}^c_{SE}}
\def \qVscrtH{\widetilde{\mathscr{V}}_H}
\def \qVscrtcSE{\widetilde{\mathscr{V}}^c_{SE}}

\def \qVscrcS{\mathscr{V}^c_S}

\def \qTcalH{\mathcal{T}_H}

\def \qVscrH{\mathscr{V}_{H}}

\def \qTH{T}

\def \qT{T}

\def \qE{E}

\def \qH{H}

\def \qf{f}
\def \qbf{\bm{f}}

\def \qOmega{\Omega}

\def \qOmegas{\Omega^{s}}

\def \qOmegasc{\overline{\Omega}^s}
\def \qOmegap{\Omega^{p}}
\def \qOmegapc{\overline{\Omega}^p}
\def \qOmegat{\widetilde{\Omega}}
\def \qdOmega{\partial\Omega}
\def \qOmegac{\overline{\Omega}}
\def \qomega{\omega}
\def \qomegac{\overline{\omega}}
\def \qdbx{\,d\bm{x}}

\def \grad{\nabla}
\def \div{\nabla\cdot}
\def \curl{\nabla\hspace{-.5ex}\times\hspace{-.5ex}}
\def \qbn{\bm{n}}

\def \qalpha{\alpha}
\def \qgamma{\gamma}

\def \qbeta{\beta}
\def \qtaulambda{\tau_{\lambda}}

\def \qFcal{\mathcal{F}}


\def \qC{C}

\def \qbx{\bm{x}}

\def \qXcal{\mathcal{X}}
\def \qx{x}
\def \qxi{\xi}
\def \qxit{\widetilde{\xi}}

\def \qbR{\mathbb{R}}
\def \qbN{\mathbb{N}}


\setlength{\extrarowheight}{4pt}

\title[Robust DD Preconditioners for Abstract SPD Operators]
{Robust Domain Decomposition Preconditioners for Abstract Symmetric Positive Definite Bilinear Forms}

\author{Y.~Efendiev} 
\address{Dept. Mathematics, Texas A\&M University, College Station, Texas, 77843, USA 
({\tt efendiev@math.tamu.edu})}

\author{J.~Galvis}
\address{Dept. Mathematics, Texas A\&M University, College Station, Texas, 77843, USA 
({\tt jugal@math.tamu.edu})}

\author{R.~Lazarov}
\address{Dept. Mathematics, Texas A\&M University, College Station, Texas, 77843, USA 
({\tt lazarov@math.tamu.edu})}

\author{J.~Willems}
\address{Radon Institute for Computational and Applied Mathematics (RICAM), Altenberger Strasse 69, 4040 Linz, Austria
({\tt joerg.willems@ricam.oeaw.ac.at})}

\begin{document}

\begin{abstract}
An abstract framework for constructing stable decompositions of the spaces corresponding to general symmetric positive definite problems into ``local'' subspaces and a global ``coarse'' space is developed. Particular applications 
of this abstract framework include practically 
important problems in porous media applications such as: the scalar elliptic (pressure) equation and the stream function formulation of its mixed form, Stokes' and Brinkman's equations. The constant in the corresponding abstract energy estimate is shown to be robust with respect to mesh parameters as well as the contrast, which is defined as the ratio of high and low values of the conductivity (or permeability). The derived stable decomposition 
allows to construct additive overlapping Schwarz iterative methods with condition numbers uniformly bounded with respect to the contrast and mesh parameters. 
The coarse spaces are obtained by patching together the eigenfunctions corresponding to the smallest eigenvalues of certain local problems. A detailed analysis of the abstract setting is provided.
The proposed decomposition builds on a method of 
Efendiev and Galvis [{\it Multiscale Model. Simul.}, 8 (2010), pp. 1461--1483] developed 
for second order scalar elliptic problems with high contrast. 
Applications to the finite element discretizations of the second order
elliptic problem in Galerkin and mixed formulation, the Stokes equations, and Brinkman's problem are presented. A 
number of numerical experiments for these problems in two spatial dimensions are provided. 
\end{abstract}


\keywords{domain decomposition, robust additive Schwarz preconditioner, spectral coarse spaces, high contrast, Brinkman's problem, multiscale problems}

\subjclass[2010]{65F10, 65N20, 65N22, 65N30, 65N55}

\maketitle
\section{Introduction}
\label{sec:intro}
Symmetric positive definite operators appear 
in the modeling of a variety of problems from environmental and engineering sciences, e.g.\  heat conduction in industrial (compound) media or fluid flow in natural subsurfaces. Two main challenges arising in the numerical solution of these problems are (1) the problem size due to spatial scales and (2) high-contrast due to large variations in the physical problem parameters. The latter e.g.\ concerns disparities in the thermal conductivities of the constituents of compound media or in the permeability field of porous media. These variations frequently range over several orders of magnitude leading to high condition numbers of the corresponding discrete systems. Besides variations in  physical parameters, the  discretization parameters (e.g.\ mesh size)  also lead to large condition numbers of the respective discrete systems.

Since in general high condition numbers entail  poor performances of iterative solvers such as conjugate gradients (CG), the design of preconditioners addressing the size of the problem and the magnitude in variations of the problem parameters has received lots of attention in the numerical analysis community. The commonly used approaches include domain decomposition methods (cf.\ e.g.\ \cite{Mat08,SBG96,TW05}) and multilevel/multigrid algorithms (cf.\ e.g.\ \cite{Bra93,Hac03,Vassilevski_book_08}). For certain classes of problems, including the scalar elliptic case, these methods are  successful in making the condition number of the preconditioned system independent of the size of the problem. However, the design of preconditioners that are robust with respect to variations in physical parameters is more challenging.

Improvements in the standard domain decomposition methods for the scalar elliptic equation, $-\div(\qkappa(\qbx)\grad \qphi)=\qf$, with a highly variable conductivity $\qkappa(\qbx)$ were made in the case of special arrangements of the highly conductive regions with respect to the coarse cells. The construction of preconditioners for these problems has been extensively studied in the last three decades (see e.g.\ \cite{SarkisDryja96,IvanRob,MandelBrezina96,Mat08,TW05}).
In the context of domain decomposition methods, one can consider overlapping and nonoverlapping methods. 
It was shown that nonoverlapping domain decomposition methods converge independently of the contrast  (e.g. \cite{DryjaWidlund02,Nepom-91} and \cite[Sections~6.4.4 and 10.2.4]{TW05}) when conductivity variations within coarse regions are bounded. 
The condition number bound for the preconditioned linear system using a two level overlapping domain decomposition method scales with the contrast, defined as 
\begin{equation}
\label{contrastportion}
\max_{\qbx\in \Omega} \qkappa(\qbx) / \min_{\qbx\in \Omega}\qkappa(\qbx) 
\end{equation}
where  $\Omega$ is the domain.
The overall condition number estimate also involves the ratio $H/\delta$, where $H$ is the coarse-mesh size and
$\delta$ is the size of the overlap region. The estimate with respect to the ratio $H/\delta$ can be improved with the help of the small overlap trick (e.g.\ \cite{IvanRob,TW05}). In this paper we  focus on improving the contrast-dependent portion (given by (\ref{contrastportion})) of the condition number. A generous overlap will be used in our methods.

Classical arguments to estimate 
the preconditioned condition number 
of a two level overlapping  domain decomposition method use 
weighted Poincar\'e inequalities
of the form 
\begin{equation}\label{eq:requiredinequality}
\int_{\qomega} \qkappa|\grad \qxi|^2(\qpsi- 
I_0^{\qomega}\qpsi )^2\qdbx
\leq \qC\int_{\qomega}  \qkappa |\grad \qpsi|^2\qdbx, 
\end{equation}
where $\qomega$ is a local subdomain in the global domain $\qOmega$,  $\qxi$  
is  a partition of unity function subordinate to  $\omega$, and 
$\qpsi\in H^1(\omega)$. The operator $I_0^\omega \qpsi$ is a local representation of the 
function $\qpsi$ in the coarse space. The constant $\qC$ above 
appears in the final bound for the condition number of the operator. 
Many of the classical arguments  that analyze 
overlapping domain decomposition methods for high contrast problems 
assume that
$\frac{\max_{\qbx\in \omega} \kappa(\qbx) }
       {\min_{\qbx\in \omega}\kappa (\qbx)}$ is bounded.
When only $|\nabla \qxi|^2$ is bounded, inequality 
(\ref{eq:requiredinequality}) can be obtained 
from a  weighted Poincar\'e  inequality whose constant is 
independent of the contrast. This weighted Poincar\'e 
inequality 
is not always valid, so a number of works were successful in addressing
the question of when it holds.
In \cite{SarkisThesis}, it was proven that the weighted Poincar\'e  inequality holds for quasi-monotonic coefficients. The author obtains robust 
preconditioners for the case of quasi-monotonic coefficients. 
We note that recently, the concept of quasi-monotonic 
coefficient has been generalized 
in \cite{RobClemens2,RobClemens1} where the authors 
analyze nonoverlapping FETI methods.  
Other approaches use special partitions of unity 
such that the ``pointwise energy``  $\qkappa(\qbx)|\grad \qxi|^2$
in (\ref{eq:requiredinequality})
is bounded. In \cite{IvanRob,IvanRobEnergyMin}, the analysis 
in \cite{TW05} has been extended to obtain explicit bounds involving 
the quantity $\qkappa(\qbx)|\grad \qxi|^2$. 
It has been shown that if the coarse space basis 
functions are constructed 
properly, then $\qkappa(\qbx)|\grad \qxi|^2$ remains bounded 
for all basis and partition of unity functions 
used, and then, the classical Poincar\'e inequality can be applied in the analysis. 
In particular, two main sets of coarse basis functions have been used: (1) multiscale finite element functions
with various boundary conditions (see \cite{EfendievBook,IvanRob,HouWuCai})
and (2) energy minimizing or trace minimizing  functions (see 
\cite{IvanRobEnergyMin,Ludmil} and references therein).
Thus, robust overlapping domain 
decomposition methods can be constructed for the case when 
the high-conductivity regions are isolated  islands. These methods  use
one coarse basis function per coarse node.

Recently, new coarse basis functions have been  proposed in \cite{EG2,EG1}. 
The construction of these coarse basis functions uses local generalized eigenvalue problems.
The resulting methods can handle a general class of heterogeneous
coefficients with high contrast. A main step in this construction is 
to identify those initial multiscale basis functions
that are used to compute a  weight function for the eigenvalue
problem. These initial multiscale basis functions are designed to capture the effects that can be localized within coarse blocks. They are further  complemented
using generalized eigenfunctions that account for features
of the solution that cannot be localized. The idea of using local and global 
eigenvectors to construct coarse spaces within two-level
and multi-level techniques has been used before (e.g.\ \cite{VCF03,SarkisEnriched}). However, these authors did not 
study the convergence with respect to physical parameters, such
as high contrast and physical parameter variation, and did not   use
 generalized eigenvalue problems to achieve small dimensional
coarse spaces.

In many applications, the discretization technique is chosen so that it preserves the 
essential physical properties. For example, mixed finite element methods are
often used in flow equations to obtain locally mass conservative velocity fields.
In a number of flow applications, high-conductivity regions
need to be represented with flow equations, such as Navier-Stokes' or Stokes'
equations due to high porosity. Such complex systems can be described by Brinkman's equations that 
may require a special stable discretization. The method proposed in  \cite{EG2,EG1} cannot be easily applied to  vector problems and more complicated
discretization methods. More precisely, it requires 
a proper eigenvalue problem for each particular differential equation.

In this paper, we extend
the framework proposed in  \cite{EG2,EG1} to general symmetric bilinear forms.
The resulting analysis  can be applied to a wide variety of differential equations  that are important in practice. A key in designing robust preconditioners is a stable decomposition 
of the global function space into local and coarse subspaces (see \cite{TW05}).
This is the main focus of our paper. We develop an abstract framework that allows deriving
a generalized eigenvalue problem for the construction of the coarse spaces and stable decompositions. In particular, some explicit bounds are obtained
for the stability constant of this  decomposition.
In the scalar elliptic case,  the analysis presented here leads to  generalized eigenvalue problems
that differ from those studied in \cite{EG2,EG1}. 

We consider the application of this abstract framework to Darcy's equation as well as to Brinkman's equations. Brinkman's equations can be viewed as a generalization
of Darcy's equation that allows both  Darcy and Stokes regions in the flow. Because Darcy's equation
is obtained under the assumption of slow flow, Brinkman's equation is inherently high-contrast.
It combines high flow described by Stokes' equations and slow flow described by Darcy's equation.
In fact, the use of high conductivities in Darcy's 
equation may be associated to free flow or  high porosity regions that 
are often described by Brinkman's equations (see \cite{Bri47_1}).
The proposed general framework can be applied to the construction of robust coarse spaces for Brinkman's equations. These coarse spaces are constructed by passing to the stream function formulation. 

In the paper at hand, we also discuss some coarse space dimension reduction techniques within our abstract framework.
The dimension reduction is achieved by choosing initial multiscale basis functions that capture as much subgrid information as possible. We discuss how the choice of various initial multiscale basis functions affects the condition number of the preconditioned system.

To test the developed theory we consider several numerical examples. In our first set of
numerical examples, we study elliptic equations with highly variable 
coefficients. We use both piecewise bilinear and  multiscale basis functions as an initial coarse space.
In the latter choice, they capture the effects of isolated
high-conductivity regions. Our numerical results show that in both
cases one obtains robust preconditioners. The use of multiscale
basis functions as an initial coarse space allows substantial dimension reduction
for problems with many small isolated inclusions.
Both Galerkin and mixed formulations are studied in this paper.
The next set of numerical results are on Brinkman's equations.
These equations are discretized using $H(div)$-conforming 
Discontinuous Galerkin methods. The numerical results show that the number of
iterations is independent of the contrast. In our final numerical example, we
consider a complex geometry with highly-variable coefficients without apparent separation of high and low conductivity regions. The numerical results show that using multiscale initial basis functions,
one can obtain robust preconditioners with small coarse dimensional spaces.  

The paper is organized as follows. In Section~\ref{sec:framework}, 
problem setting and notation are introduced. In Section~\ref{sec:stable_decomposition}, the abstract analysis of the stable decomposition is presented. Section~\ref{sec:applications} is dedicated 
to applications of the abstract framework to the Galerkin and mixed formulation 
of Darcy's equation, Stokes' equations, and Brinkman's equations.
In Section~\ref{sec:dim_coarse}, we discuss the dimension reduction 
of the coarse space. Representative numerical results are presented in
Section~\ref{sec:numerics}.

\section{Problem Setting and Notation}
\label{sec:framework}
Let $\qOmega\subset\qbR^\qn$ be a bounded polyhedral domain, and let $\qTcalH$ be a quasiuniform quadrilateral ($\qn=2$) or hexahedral ($\qn=3$) triangulation of $\qOmega$ with mesh-parameter $\qH$. Let $\qXcal=\{\qbx_\qj\}_{\qj=1}^{\qnbx}$ be the set of nodes of $\qTcalH$, and for each $\qbx_\qj\in\qXcal$ we set 
$$
\qOmega_\qj:=interior\(\dcup{}{} \{\overline{\qT} \,|\, \qT\in \qTcalH,\    \qbx_\qj \in \overline{\qT} \}\),
$$ i.e., 
$\qOmega_\qj$ 
is the union of all cells surrounding $\qbx_\qj$ (see Figure~\ref{fig:domain}).
\begin{figure}
\resizebox{.25\textwidth}{!}{\input{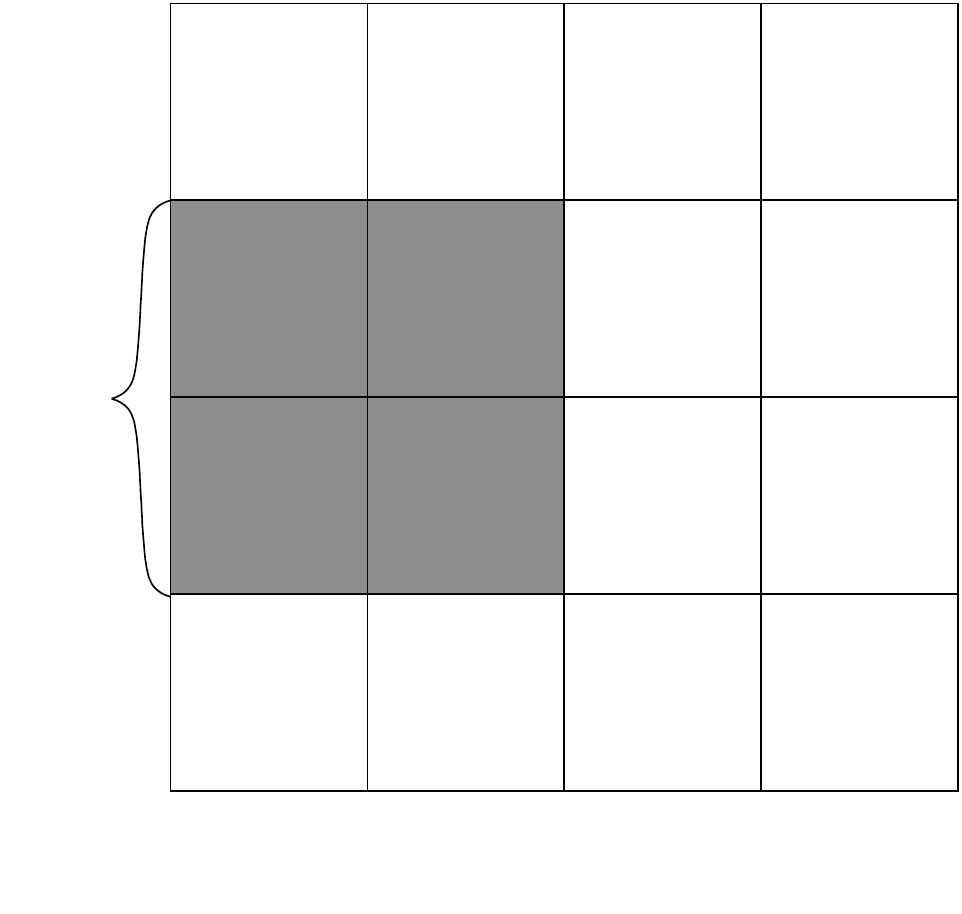_t}}
\caption{Triangulation $\qTcalH$.}
\label{fig:domain}
\end{figure}

For a suitable separable Hilbert space $\qVscr_0=\qVscr_0(\qOmega)$ of functions defined on $\qOmega$ and for any subdomain $\qomega\subset\qOmega$ we set $\qVscr(\qomega):=\{\qphi|_{\qomega}\, |\, \qphi\in\qVscr_0\}$ and we consider a family of symmetric positive semi-definite bounded bilinear forms 
$$\qa{\qomega}{\cdot}{\cdot}:\,\(\qVscr(\qomega),\,\qVscr(\qomega)\)\rightarrow \qbR.$$ 
For the case $\qomega = \qOmega$ we drop the subindex, i.e., $\qa{}{\cdot}{\cdot} = \qa{\qOmega}{\cdot}{\cdot}$ and we additionally assume that $\qa{}{\cdot}{\cdot}$ is positive definite. For ease of notation we write $\qa{\qomega}{\qphi}{\qpsi}$ instead of $\qa{\qomega}{\qphi|_{\qomega}}{\qpsi|_{\qomega}}$ for any $\qphi,\, \psi\in\qVscr_0$.
Furthermore, we assume that for any $\qphi\in\qVscr_0$ and any family of pairwisely disjoint subdomains
$\{\qomega_\qj\}_{\qj=1}^{\qn_\qomega}$ with $\dcup{\qj=1}{\qn_\qomega}\qomegac_\qj=\qOmegac$ 
\begin{equation}
\label{eq:sum_bilin}
\qa{}{\qphi}{\qphi}=\sum_{\qj=1}^{\qn_\qomega}\qa{\qomega_\qj}{\qphi} {\qphi}.
\end{equation}

Now, the goal is to construct a ``coarse'' subspace $\qVscrH=\qVscrH(\qOmega)$ of $\qVscr_0$ with the following property: for any $\qphi\in\qVscr_0$ there is a representation
\begin{subequations}
\label{eq:stable_decom}
\begin{equation}
\label{eq:stable_decom_1}
\qphi = \dsum{\qj=0}{\qnbx}\qphi_\qj \text{ with } \qphi_0\in\qVscrH,\text{ and } \qphi_{\qj}\in\qVscr_0(\qOmega_\qj) \text{ for } \qj=1,\ldots,\qnbx
\end{equation}
such that 
\begin{equation}
\label{eq:stable_decom_2}
\dsum{\qj=0}{\qnbx}\qa{}{\qphi_\qj}{\qphi_\qj} \le \qC \qa{}{\qphi}{\qphi},
\end{equation}
\end{subequations}
where $\qVscr_0(\qOmega_\qj)\subset \qVscr(\qOmega_\qj)$ is a subspace such that $\qa{\qOmega_\qj}{\cdot}{\cdot}:\, (\qVscr_0(\qOmega_\qj),\qVscr_0(\qOmega_\qj))\rightarrow \qbR$ is positive definite. Note that $\qa{\qOmega_\qj}{\cdot}{\cdot}:\, (\qVscr(\qOmega_\qj),\qVscr(\qOmega_\qj))\rightarrow \qbR$ is in general only positive {\bf semi}-definite.

By e.g.\ \cite[Section~2.3]{TW05} we know that the additive Schwarz preconditioner corresponding to \eqref{eq:stable_decom_1} yields a condition number that only depends on the constant $\qC$ in \eqref{eq:stable_decom_2} and the maximal number of overlaps of the subdomains $\qOmega_\qj$, $\qj=1,\ldots,\qnbx$. Thus, we would like to ``control'' the constant $\qC$ and keep the dimension of $\qVscrH$ ``as small as possible''.

For the construction of $\qVscrH$ we need more notation.
Let $\{\qxi_\qj\}_{\qj=1}^{\qnbx}:\,\qOmega\rightarrow[0,\,1]$ be a partition of unity subordinate to $\{\qOmega_\qj\}_{\qj=1}^{\qnbx}$ such that $\supp(\qxi_\qj)=\qOmegac_\qj$ and for any $\qphi\in\qVscr_0$  we have $\qxi_\qj\qphi\in\qVscr_0$, $(\qxi_\qj\qphi)|_{\qOmega_\qj}\in\qVscr_0(\qOmega_\qj)$,  $\qj=1,\ldots,\qnbx$.
Using this notation we may for any $\qi,\,\qj=1,\ldots,\qnbx$ define the following symmetric bilinear form:
\begin{equation}
\label{eq:def_msum}
\begin{array}{rcl}
\qmsum{\qOmega_\qj}{\cdot}{\cdot}:\, (\qVscr(\qOmega_\qj),\qVscr(\qOmega_\qj))&\rightarrow& \qbR\\
\qmsum{\qOmega_\qj}{\qphi}{\qpsi} & := & \dsum{\qi\in\qI_\qj}{}\qa{\qOmega_\qj}{\qxi_\qj\qxi_\qi \qphi}{\qxi_\qj\qxi_\qi \qpsi}
\end{array}
\end{equation}
where $\qI_\qj:=\{\qi = 1,\ldots,\qnbx\, |\, \qOmega_\qi\cap\qOmega_\qj \ne \emptyset\}$. Let $\qnI:=\dmax{\qj=1,\ldots,\qnbx}\#\qI_\qj$ denote the maximal number of overlaps of the $\qOmega_\qj$'s. To ease the notation, as we did for the bilinear form $\qa{}{\cdot}{\cdot}$, we write $\qmsum{\qOmega_\qj}{\qphi}{\qpsi}$ instead of $\qmsum{\qOmega_\qj}{\qphi|_{\qOmega_\qj}}{\qpsi|_{\qOmega_\qj}}$ for any $\qphi,\, \psi\in\qVscr_0$.

Due to our assumptions on $\{\qxi_\qj\}_{\qj=1}^{\qnbx}$ we have that \eqref{eq:def_msum} is well-defined. Also note, that since $\supp(\qxi_\qj)=\qOmegac_\qj$ we have $\qxi_\qj\qphi|_{\qOmega_\qj} \equiv 0 \Leftrightarrow \qphi|_{\qOmega_\qj}\equiv 0$, which implies that $\qmsum{\qOmega_\qj}{\cdot}{\cdot}:\, (\qVscr(\qOmega_\qj),\qVscr(\qOmega_\qj))\rightarrow \qbR$ is positive definite.

Now for  $\qj=1,\ldots,\qnbx$ we consider the generalized eigenvalue problems:
Find $(\qlambda^\qj_\qi,\, \qvarphi^\qj_\qi)\in (\qbR,\, \qVscr(\qOmega_\qj))$ such that
\begin{equation}
\label{eq:gen_eigenval}
\qa{\qOmega_\qj}{\qpsi}{\qvarphi^\qj_\qi} = \qlambda^\qj_\qi\qmsum{\qOmega_\qj}{\qpsi}{\qvarphi^\qj_\qi},\quad \forall \qpsi\in\qVscr(\qOmega_\qj).
\end{equation}
Without loss of generality we assume that the eigenvalues are ordered, i.e., $0\le\qlambda^\qj_1 \le \qlambda^\qj_2 \le \ldots \le \qlambda^\qj_\qi \le \qlambda^\qj_{\qi +1} \le \ldots$.

It is easy to see that any two eigenfunctions corresponding to two distinct eigenvalues are $\qa{\qOmega_\qj}{\cdot}{\cdot}$- and $\qmsum{\qOmega_\qj}{\cdot}{\cdot}$-orthogonal. By orthogonalizing the eigenfunctions corresponding to the same eigenvalues we can thus, without loss of generality, assume that all computed eigenfunctions are pairwisely $\qa{\qOmega_\qj}{\cdot}{\cdot}$- and $\qmsum{\qOmega_\qj}{\cdot}{\cdot}$-orthogonal. Now, every function in $\qVscr(\qOmega_\qj)$ has an expansion with respect to the eigenfunctions of \eqref{eq:gen_eigenval}. This is the reason why the generalized eigenproblem is posed with respect to $\qVscr(\qOmega_\qj)$ as opposed to $\qVscr_0(\qOmega_\qj).$

For $\qphi\in\qVscr_0$ let $\qphi^\qj_0$ be the $\qmsum{\qOmega_\qj}{\cdot}{\cdot}$-orthogonal projection of $\qphi|_{\qOmega_\qj}$ onto the first $\qL_\qj$ eigenfunctions of \eqref{eq:gen_eigenval}, where $\qL_\qj\in\qbN_0$ is some non-negative integer, i.e.,
\begin{equation}
\label{eq:orth_proj}
\qmsum{\qOmega_\qj}{\qphi-\qphi^\qj_0}{\qvarphi^\qj_\qi} = 0, \quad \forall\, \qi=1,\ldots,\qL_\qj.
\end{equation}
If $\qL_\qj=0$, we set $\qphi^\qj_0\equiv 0$.
We assume that for any $\qj=1,\ldots,\qnbx$ and a sufficiently small ``threshold`` $\qtaulambda^{-1}>0$ we may choose $\qL_\qj$ such that $\qlambda^\qj_{\qL_\qj + 1} \ge \qtaulambda^{-1}$. Since any function in $\qVscr(\qOmega_\qj)$ has an expansion with respect to the eigenfunctions of \eqref{eq:gen_eigenval} we can now choose $\qalpha_\qi\in\qbR$ so that
$$\qphi|_{\qOmega_\qj}-\qphi^\qj_0 = \dsum{\qi>\qL_\qj}{}\qalpha_\qi \qvarphi^\qj_\qi.$$
Then we observe that
\begin{equation}
\label{eq:est_inv_eigen}
\begin{array}{rclr}
\qmsum{\qOmega_\qj}{\qphi-\qphi^\qj_0}{\qphi-\qphi^\qj_0}&=& \qmsum{\qOmega_\qj}{\dsum{\qi>\qL_\qj}{}\qalpha_\qi \qvarphi^\qj_\qi}{\dsum{\qi>\qL_\qj}{}\qalpha_\qi \qvarphi^\qj_\qi}&\\
&=&
\dsum{\qi>\qL_\qj}{}\qmsum{\qOmega_\qj}{\qalpha_\qi \qvarphi^\qj_\qi}{\qalpha_\qi \qvarphi^\qj_\qi}& \text{(by orthogonality)}\\
&=&
\dsum{\qi>\qL_\qj}{}\dfrac{1}{\qlambda^\qj_\qi}\qa{\qOmega_\qj}{\qalpha_\qi \qvarphi^\qj_\qi}{\qalpha_\qi \qvarphi^\qj_\qi} & \text{(by \eqref{eq:gen_eigenval})}\\
&\le&
\dfrac{1}{\qlambda^\qj_{\qL_\qj +1}} \dsum{\qi>\qL_\qj}{}\qa{\qOmega_\qj}{\qalpha_\qi \qvarphi^\qj_\qi}{\qalpha_\qi \qvarphi^\qj_\qi}&\\
&\le&
\qtaulambda\, \qa{\qOmega_\qj}{\qphi-\qphi^\qj_0}{\qphi-\qphi^\qj_0}\\
&\le&
\qtaulambda\, \qa{\qOmega_\qj}{\qphi}{\qphi}.
\end{array}
\end{equation}

\section{Coarse Spaces yielding Robust Stable Decompositions}
\label{sec:stable_decomposition}

With these preliminaries we are now able to define a decomposition described in \eqref{eq:stable_decom}: First, we specify the coarse space by 
\begin{equation}\label{eq:coarse}
\qVscrH:=\qspan\{\qxi_\qj\qvarphi^\qj_\qi\, |\, \qj=1,\ldots,\qnbx \text{ and } \qi=1,\ldots,\qL_\qj\}.
\end{equation}
Then, for any $\qphi\in\qVscr$ let 
\begin{subequations}
\label{eq:stab_decom_specific}
\begin{equation}
\label{eq:stab_decom_coarse}
\qphi_0:=\dsum{\qj=1}{\qnbx}\qxi_\qj \qphi^\qj_0\in \qVscrH,
\end{equation}
where $\qphi^\qj_0$ is chosen according to \eqref{eq:orth_proj}. For $\qj=1,\ldots,\qnbx$ define
\begin{equation}
\label{eq:stab_decom_fine}
\qphi_\qj:= (\qxi_\qj(\qphi-\qphi_0))|_{\qOmega_\qj}\in \qVscr_0(\qOmega_\qj) \quad \mbox{ so that } \quad 
\qphi = \dsum{\qj=0}{\qnbx} \qphi_\qj.
\end{equation}
\end{subequations}

Before analyzing this decomposition we summarize all assumptions using the notation above:
\begin{enumerate}
\item[(A1)] $\qa{\qomega}{\cdot}{\cdot}:\,(\qVscr(\qomega),\qVscr(\qomega))\rightarrow \qbR$ is symmetric positive semi-definite for any subdomain $\qomega\subset\qOmega$, and $\qa{}{\cdot}{\cdot}=\qa{\qOmega}{\cdot}{\cdot}$ is positive definite.
\item[(A2)] For any $\qphi\in\qVscr_0$ and any pairwisely disjoint family of subdomains $\{\qomega_\qj\}_{\qj=1}^{\qn_\qomega}$ with $\dcup{\qj=1}{\qn_\qomega}\qomegac_\qj=\qOmegac$ we have $\qa{}{\qphi}{\qphi}=\dsum{\qj=1}{\qn_\qomega}\qa{\qomega_\qj}{\qphi}{\qphi}$. 
(This implies that $\qa{}{\qphi}{\qphi}\le \dsum{\qj=1}{\qnbx}\qa{\qOmega_\qj}{\qphi}{\qphi}$, since $\qa{\qomega}{\cdot}{\cdot}$ is semi-definite.)
\item[(A3)] 
$\qa{\qOmega_\qj}{\cdot}{\cdot}:\,(\qVscr_0(\qOmega_\qj) ,\qVscr_0(\qOmega_\qj))\rightarrow \qbR$ is positive definite for all $1\le \qj\le \qnbx$.
\item[(A4)] $\{\qxi_\qj\}_{\qj=1}^{\qnbx}:\,\qOmega\rightarrow[0,\,1]$ is a family of functions such that
\begin{enumerate}
\item[(a)] $\dsum{\qj=1}{\qnbx}\qxi_\qj\equiv 1$ on $\qOmega$;
\item[(b)] 
$\supp(\qxi_\qj)=\qOmegac_\qj$ for  $\qj=1,\ldots,\qnbx$;
\item[(c)] 
For $\qphi\in\qVscr_0$ we have $\qxi_\qj\qphi\in\qVscr_0$ and $(\qxi_\qj\qphi)|_{\qOmega_\qj}\in\qVscr_0(\qOmega_\qj)$ for $\qj=1,\ldots,\qnbx$.
\end{enumerate}
\item[(A5)] For 
a sufficiently small threshold $\qtaulambda^{-1}$ we may choose $\qL_\qj$ such that $\qlambda^\qj_{\qL_\qj + 1} \ge \qtaulambda^{-1}$ for all $\qj=1,\ldots,\qnbx$.
\end{enumerate}
As noted above, these assumptions imply in particular, that for any $1\le \qj\le \qnbx$, the bilinear form $\qmsum{\qOmega_\qj}{\cdot}{\cdot}:\,(\qVscr(\qOmega_\qj),\qVscr(\qOmega_\qj))\rightarrow \qbR$ is positive definite.

The following lemma establishes the required energy bound for the local contributions in decomposition \eqref{eq:stable_decom}:
\begin{lemma}
\label{prop:local_est}
Assume (A1)--(A5) hold. Then, for $\qphi\in\qVscr_0$  we have
\begin{equation}
\dsum{\qj=1}{\qnbx}\qa{}{\qphi_\qj}{\qphi_\qj}  \le \qC\, \qtaulambda\, \qa{}{\qphi}{\qphi},
\end{equation}
where $\qC$ only depends on $\qnI$ (the maximal number of overlaps of the subdomains $\{\qOmega_\qj\}_{\qj=1}^{\qnbx}$).
\end{lemma}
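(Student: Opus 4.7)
\textit{Proof plan.} The plan is to decompose each local piece $\qphi_\qj$ by the partition of unity into at most $\qnI$ summands, one for every overlapping neighbor $\qi\in\qI_\qj$, and to recognize each such summand as a single term in the definition of $\qmsum{\qOmega_\qi}{\qphi-\qphi^\qi_0}{\qphi-\qphi^\qi_0}$, so that the spectral bound \eqref{eq:est_inv_eigen} can be invoked term by term.

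The first ingredient is a \emph{localization principle} derived from (A2): for any $w\in\qVscr_0$ supported in $\overline{\qOmega_\qj}$, applying (A2) to the partition $\{\qOmega_\qj,\,\qOmega\setminus\overline{\qOmega_\qj}\}$ gives $\qa{}{w}{w}=\qa{\qOmega_\qj}{w}{w}$ (the complementary term vanishing by bilinearity), and the same argument applied twice yields $\qa{\qOmega_\qj}{w}{w}=\qa{\qOmega_\qi}{w}{w}$ whenever $w$ is supported in $\overline{\qOmega_\qi\cap\qOmega_\qj}$. In particular, $\qa{}{\qphi_\qj}{\qphi_\qj}=\qa{\qOmega_\qj}{\qphi_\qj}{\qphi_\qj}$.

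Next, using $\sum_\qk\qxi_\qk\equiv 1$ together with $\supp\qxi_\qk=\qOmegac_\qk$, one rewrites
\[
\qphi_\qj=\qxi_\qj(\qphi-\qphi_0)\big|_{\qOmega_\qj}=\dsum{\qi\in\qI_\qj}{}\qxi_\qj\qxi_\qi(\qphi-\qphi^\qi_0)\big|_{\qOmega_\qj},
\]
a sum of at most $\qnI$ terms, each supported in $\overline{\qOmega_\qi\cap\qOmega_\qj}$. Cauchy--Schwarz applied to the semi-definite form $\qa{\qOmega_\qj}{\cdot}{\cdot}$ then gives
\[
\qa{\qOmega_\qj}{\qphi_\qj}{\qphi_\qj}\le \qnI\dsum{\qi\in\qI_\qj}{}\qa{\qOmega_\qj}{\qxi_\qj\qxi_\qi(\qphi-\qphi^\qi_0)}{\qxi_\qj\qxi_\qi(\qphi-\qphi^\qi_0)}.
\]
By the localization principle and the symmetry $\qxi_\qj\qxi_\qi=\qxi_\qi\qxi_\qj$, each summand on the right equals one of the non-negative terms in the definition of $\qmsum{\qOmega_\qi}{\qphi-\qphi^\qi_0}{\qphi-\qphi^\qi_0}$ (namely the one indexed by $\qj\in\qI_\qi$), so \eqref{eq:est_inv_eigen} implies
\[
\qa{\qOmega_\qj}{\qphi_\qj}{\qphi_\qj}\le \qnI\,\qtaulambda\dsum{\qi\in\qI_\qj}{}\qa{\qOmega_\qi}{\qphi}{\qphi}.
\]

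To conclude, I would sum over $\qj$ and exchange the order of summation, using that each $\qi$ lies in at most $\qnI$ sets $\qI_\qj$, and then invoke the finite-overlap bound $\sum_\qi\qa{\qOmega_\qi}{\qphi}{\qphi}\le \qnI\,\qa{}{\qphi}{\qphi}$, which follows from (A2) applied to a common refinement of $\{\qOmega_\qi\}_{\qi=1}^{\qnbx}$ into disjoint atoms. This yields the claim with $\qC$ of order $\qnI^3$. I expect the main obstacle to be pinning down the localization principle cleanly from (A2) alone, since it is the step that converts the $\qa{\qOmega_\qj}$-contributions produced by Cauchy--Schwarz into genuine summands of $\qmsum{\qOmega_\qi}{\cdot}{\cdot}$; once it is in place, the remainder is routine partition-of-unity bookkeeping combined with the spectral inequality \eqref{eq:est_inv_eigen}.
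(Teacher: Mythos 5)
Your proof is correct and follows essentially the same route as the paper: expand $\qphi-\qphi_0$ via the partition of unity, apply Cauchy--Schwarz to split $\qa{\qOmega_\qj}{\qphi_\qj}{\qphi_\qj}$ into at most $\qnI$ cross terms, switch to $\qa{\qOmega_\qi}{\cdot}{\cdot}$ by the support/localization argument from (A2), identify the resulting pieces with $\qmsum{\qOmega_\qi}{\cdot}{\cdot}$, invoke \eqref{eq:est_inv_eigen}, and finish with the finite-overlap bound. The only difference is bookkeeping: the paper first swaps the double sum so that the inner sum reconstitutes $\qmsum{\qOmega_\qi}{\qphi-\qphi^\qi_0}{\qphi-\qphi^\qi_0}$ exactly, yielding $\qC=\qnI^2$, whereas you bound each individual cross term by the full $\qmsum{\qOmega_\qi}{\cdot}{\cdot}$ before summing, which costs an extra factor of $\qnI$ and gives $\qnI^3$ -- immaterial for the stated conclusion.
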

\begin{proof}
Observe that
$$
\begin{array}{rclr}
\dsum{\qj=1}{\qnbx}\qa{}{\qphi_\qj}{\qphi_\qj} 
&=&
\dsum{\qj=1}{\qnbx}\qa{\qOmega_\qj}{\qxi_\qj(\qphi-\qphi_0)}{\qxi_\qj(\qphi-\qphi_0)} & \text{(by \eqref{eq:stab_decom_fine})}\\
&=&
\dsum{\qj=1}{\qnbx}\qa{\qOmega_\qj}{\qxi_\qj\dsum{\qi=1}{\qnbx}\qxi_\qi(\qphi-\qphi^\qi_0)}{\qxi_\qj \dsum{\qi=1}{\qnbx}\qxi_\qi(\qphi-\qphi^\qi_0)} & \text{(by \eqref{eq:stab_decom_coarse})}\\
&=&
\dsum{\qj=1}{\qnbx}\qa{\qOmega_\qj}{\qxi_\qj\dsum{\qi\in\qI_\qj}{}\qxi_\qi(\qphi-\qphi^\qi_0)}{\qxi_\qj \dsum{\qi\in\qI_\qj}{}\qxi_\qi(\qphi-\qphi^\qi_0)}&\\
&\le&
\qnI \underbrace{\dsum{\qj=1}{\qnbx}\dsum{\qi\in\qI_\qj}{}
\qa{\qOmega_\qj}{\qxi_\qj\qxi_\qi(\qphi-\qphi^\qi_0)}{\qxi_\qj \qxi_\qi(\qphi-\qphi^\qi_0)}}_{=:\qE_1},&\\
\end{array}
$$
where in the last step we have used Schwarz' inequality together with $\#\qI_\qj\le\qnI$. Note, that we furthermore have
$$
\begin{array}{rclr}
\qE_1
&=&
\dsum{\qj=1}{\qnbx}\dsum{\qi\in\qI_\qj}{} \qa{\qOmega_\qi}{\qxi_\qj\qxi_\qi(\qphi-\qphi^\qi_0)}{\qxi_\qj \qxi_\qi(\qphi-\qphi^\qi_0)}&\\
&=&
\dsum{\qi=1}{\qnbx}\dsum{\qj\in\qI_\qi}{} \qa{\qOmega_\qi}{\qxi_\qj\qxi_\qi(\qphi-\qphi^\qi_0)}{\qxi_\qj \qxi_\qi(\qphi-\qphi^\qi_0)}&\\
&=&
\dsum{\qi=1}{\qnbx}\qmsum{\qOmega_\qi}{\qphi-\qphi^\qi_0}{\qphi-\qphi^\qi_0}&(\text{by \eqref{eq:def_msum}}).\\
\end{array}
$$
Thus, we obtain
\begin{equation}
\begin{array}{rclr}
\dsum{\qj=1}{\qnbx}\qa{}{\qphi_\qj}{\qphi_\qj} 
&\le&
\qnI \dsum{\qj=1}{\qnbx}\qmsum{\qOmega_\qj}{\qphi-\qphi^\qj_0}{\qphi-\qphi^\qj_0}&\\
&\le&
\qnI\, \qtaulambda\, \dsum{\qj=1}{\qnbx} \qa{\qOmega_\qj}{\qphi}{\qphi}&(\text{by \eqref{eq:est_inv_eigen}})\\
&\le&
\qnI^2\, \qtaulambda\, \qa{}{\qphi}{\qphi},\\
\end{array}
\end{equation}
where the last inequality holds due to (A2) and the fact that any point in $\qOmega$ simultaneously lies in at most $\qnI$ subdomains $\qOmega_\qj$, $\qj=1,\ldots,\qnbx$. 
\end{proof}

\begin{remark}
\label{rem:est_local}
Note, that by the proof of Lemma~\ref{prop:local_est} we in particular have
$$
\dsum{\qj=1}{\qnbx}\qa{\qOmega_\qj}{\qxi_\qj\dsum{\qi\in\qI_\qj}{}\qxi_\qi(\qphi-\qphi^\qi_0)}{\qxi_\qj \dsum{\qi\in\qI_\qj}{}\qxi_\qi(\qphi-\qphi^\qi_0)}
\le \qnI^2\, \qtaulambda\, \qa{}{\qphi}{\qphi}.
$$
\end{remark}

Now, we proceed with the necessary energy estimate for the coarse component $\qphi_0$ of the decomposition
\eqref{eq:stab_decom_coarse}.

\begin{lemma}
\label{prop:coarse_est}
Let (A1)--(A5) hold. Then, for $\qphi_0$ defined by \eqref{eq:stab_decom_coarse}  we have that
\begin{equation}
\label{eq:coarse_est}
\qa{}{\qphi_0}{\qphi_0} \le (2+\qC\, \qtaulambda)\, \qa{}{\qphi}{\qphi},
\end{equation}
where, as above, $\qC$ only depends on $\qnI$.
\end{lemma}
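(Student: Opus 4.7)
The plan is to write $\qphi_0 = \qphi - (\qphi - \qphi_0)$ and invoke the triangle inequality in the seminorm induced by $\qa{}{\cdot}{\cdot}$, so that
\[
\qa{}{\qphi_0}{\qphi_0} \le 2\,\qa{}{\qphi}{\qphi} + 2\,\qa{}{\qphi-\qphi_0}{\qphi-\qphi_0}.
\]
The whole task then reduces to controlling $\qa{}{\qphi-\qphi_0}{\qphi-\qphi_0}$ by $\qC\,\qtaulambda\,\qa{}{\qphi}{\qphi}$. The first observation is that, by the partition of unity property (A4)(a),
\[
\qphi - \qphi_0 \;=\; \dsum{\qj=1}{\qnbx}\qxi_\qj\qphi - \dsum{\qj=1}{\qnbx}\qxi_\qj\qphi^\qj_0 \;=\; \dsum{\qj=1}{\qnbx}\qxi_\qj(\qphi-\qphi^\qj_0),
\]
and that the $\qj$-th summand, restricted to $\qOmega_\qj$, coincides with $\qphi_\qj$ by \eqref{eq:stab_decom_fine} together with the identity $\sum_{\qi\in\qI_\qj}\qxi_\qi \equiv 1$ on $\qOmega_\qj$. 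Since $\qphi_\qj\in\qVscr_0(\qOmega_\qj)$ is extended by zero outside $\qOmega_\qj$, we may identify $\qphi-\qphi_0 = \dsum{\qj=1}{\qnbx}\qphi_\qj$ globally.

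Next, I would use the finite-overlap property together with (A2) to convert the energy of a sum into a sum of energies. Concretely, fix any pairwisely disjoint refinement $\{\qomega_\qk\}$ of $\qOmega$ compatible with the cover $\{\qOmega_\qj\}$, so that each $\qomega_\qk$ is contained in exactly the subdomains $\qOmega_\qj$ with $\qj\in\qJ_\qk$, where $\#\qJ_\qk\le \qnI$. By (A2),
\[
\qa{}{\qphi-\qphi_0}{\qphi-\qphi_0} \;=\; \dsum{\qk}{}\qa{\qomega_\qk}{\dsum{\qj\in\qJ_\qk}{}\qphi_\qj}{\dsum{\qj\in\qJ_\qk}{}\qphi_\qj}.
\]
Applying the elementary inequality $\qa{\qomega_\qk}{\dsum{\qj\in\qJ_\qk}{}\qphi_\qj}{\dsum{\qj\in\qJ_\qk}{}\qphi_\qj} \le \qnI \dsum{\qj\in\qJ_\qk}{}\qa{\qomega_\qk}{\qphi_\qj}{\qphi_\qj}$ (which follows from $\qa{\qomega_\qk}{\cdot}{\cdot}$ being positive semi-definite and Cauchy--Schwarz) and swapping the order of summation, one recovers
\[
\qa{}{\qphi-\qphi_0}{\qphi-\qphi_0} \;\le\; \qnI\dsum{\qj=1}{\qnbx}\qa{\qOmega_\qj}{\qphi_\qj}{\qphi_\qj} \;=\; \qnI \dsum{\qj=1}{\qnbx}\qa{}{\qphi_\qj}{\qphi_\qj},
\]
where the last equality follows from (A2) applied to the partition $\{\qOmega_\qj,\,\qOmega\setminus\qOmega_\qj\}$ (or any partition refining it), using that $\qphi_\qj$ vanishes outside $\qOmega_\qj$.

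Finally, apply Lemma~\ref{prop:local_est} to the right-hand side to obtain $\dsum{\qj=1}{\qnbx}\qa{}{\qphi_\qj}{\qphi_\qj} \le \qC\,\qtaulambda\,\qa{}{\qphi}{\qphi}$ with $\qC$ depending only on $\qnI$, and substitute back into the triangle inequality to conclude \eqref{eq:coarse_est} with a constant of the form $2+\qC\,\qtaulambda$. I expect the only delicate step to be the passage from $\qa{}{\sum_\qj\qphi_\qj}{\sum_\qj\qphi_\qj}$ to $\qnI\sum_\qj\qa{}{\qphi_\qj}{\qphi_\qj}$; without assuming any locality beyond (A2), one must explicitly invoke a disjoint refinement of $\qOmega$ and exploit the bounded overlap $\qnI$ so as to avoid picking up a factor depending on $\qnbx$.
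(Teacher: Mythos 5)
Your proof is correct in its essentials and takes a genuinely different, shorter route than the paper's. The paper's proof of Lemma~\ref{prop:coarse_est} never identifies $\qphi-\qphi_0$ with $\dsum{\qj=1}{\qnbx}\qphi_\qj$; instead it writes $E_2:=\qa{}{\qphi_0-\qphi}{\qphi_0-\qphi}$ as $\dsum{\qj}{}\qa{\qOmega_\qj}{\dsum{\qi\in\qI_\qj}{}\qxi_\qi(\qphi_0^\qi-\qphi)}{\cdot}$, inserts $1=(1-\qxi_\qj)+\qxi_\qj$, and then estimates the two resulting pieces by Remark~\ref{rem:est_local} through a further layer of Schwarz inequalities and index reshuffling, ending at $\qa{}{\qphi_0}{\qphi_0}\le (2+4(\qnI^4+\qnI^2)\qtaulambda)\,\qa{}{\qphi}{\qphi}$. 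Your argument instead exploits the identity $\qphi-\qphi_0=\dsum{\qj=1}{\qnbx}\qphi_\qj$ and reduces the coarse bound to Lemma~\ref{prop:local_est} via a standard finite-overlap/Cauchy--Schwarz argument over a disjoint refinement, yielding the cleaner constant $2+2\qnI^3\qtaulambda$. Both deliver a bound of the form $(2+\qC\qtaulambda)\qa{}{\qphi}{\qphi}$ with $\qC$ depending only on $\qnI$, so the theorem is unaffected; your route is shorter, reuses Lemma~\ref{prop:local_est} directly rather than re-deriving similar estimates, and is slightly sharper in the dependence on $\qnI$.

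One local flaw in your write-up: you justify $\qphi-\qphi_0=\dsum{\qj=1}{\qnbx}\qphi_\qj$ by claiming that the $\qj$-th summand $\qxi_\qj(\qphi-\qphi_0^\qj)$ restricted to $\qOmega_\qj$ ``coincides with $\qphi_\qj$''. That term-by-term identification is false: by \eqref{eq:stab_decom_fine}, $\qphi_\qj=(\qxi_\qj(\qphi-\qphi_0))|_{\qOmega_\qj}$, not $(\qxi_\qj(\qphi-\qphi_0^\qj))|_{\qOmega_\qj}$, and $\qphi_0\ne\qphi_0^\qj$ in general. Fortunately the identity you actually need is immediate from the definition and (A4)(a): $\dsum{\qj=1}{\qnbx}\qphi_\qj = \dsum{\qj=1}{\qnbx}\qxi_\qj(\qphi-\qphi_0) = \bigl(\dsum{\qj=1}{\qnbx}\qxi_\qj\bigr)(\qphi-\qphi_0) = \qphi-\qphi_0$, so you should replace the term-by-term claim with this one-line computation. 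With that fix, and noting that a compatible disjoint refinement does exist (take the cells of $\qTcalH$, each contained in at most $2^\qn\le\qnI$ of the $\qOmega_\qj$, and observe that $\qa{\qomega_\qk}{\qphi_\qj}{\qphi_\qj}=0$ whenever $\qomega_\qk\cap\qOmega_\qj=\emptyset$ since $\qphi_\qj$ is extended by zero), the argument is complete.
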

\begin{proof}
First, we note that
\begin{equation}
\label{eq:proof2_1}
\begin{array}{rclr}
\qa{}{\qphi_0}{\qphi_0}
&=&
\qa{}{\dsum{\qi=1}{\qnbx} \qxi_\qi \qphi_0^\qi}{\dsum{\qi=1}{\qnbx} \qxi_\qi \qphi_0^\qi}&\text{(by \eqref{eq:stab_decom_coarse})}\\
&=&
\qa{}{\dsum{\qi=1}{\qnbx} \qxi_\qi (\qphi_0^\qi-\qphi) +\qphi}{\dsum{\qi=1}{\qnbx} \qxi_\qi (\qphi_0^\qi -\qphi)+\qphi}& \text{(by (A4))}\\
&\le&
2 \underbrace{\qa{}{\dsum{\qi=1}{\qnbx} \qxi_\qi (\qphi_0^\qi-\qphi)}{\dsum{\qi=1}{\qnbx} \qxi_\qi (\qphi_0^\qi -\qphi)} }_{=:E_2} + 2\qa{}{\qphi}{\qphi},
\end{array}
\end{equation}
where we have used Schwarz' inequality.
Now, observe that
\begin{equation}
\label{eq:proof2_2}
\begin{array}{rclr}
E_2
&\le&
\dsum{\qj=1}{\qnbx}\qa{\qOmega_\qj}{\dsum{\qi\in\qI_\qj}{} \qxi_\qi (\qphi_0^\qi-\qphi)}{\dsum{\qi\in\qI_\qj}{} \qxi_\qi (\qphi_0^\qi-\qphi)} &\text{(by (A2))}\\
&=&
\dsum{\qj=1}{\qnbx}\qa{\qOmega_\qj}{(1-\qxi_\qj+\qxi_\qj)\dsum{\qi\in\qI_\qj}{} \qxi_\qi (\qphi_0^\qi-\qphi)}{(1-\qxi_\qj+\qxi_\qj)\dsum{\qi\in\qI_\qj}{} \qxi_\qi (\qphi_0^\qi-\qphi)}\\
&\le&
2 \underbrace{\dsum{\qj=1}{\qnbx}\qa{\qOmega_\qj}{(1-\qxi_\qj)\dsum{\qi\in\qI_\qj}{} \qxi_\qi (\qphi_0^\qi-\qphi)}{(1-\qxi_\qj)\dsum{\qi\in\qI_\qj}{} \qxi_\qi (\qphi_0^\qi-\qphi)}}_{=:E_3}\\
&&+
2 \dsum{\qj=1}{\qnbx}\qa{\qOmega_\qj}{\qxi_\qj\dsum{\qi\in\qI_\qj}{} \qxi_\qi (\qphi_0^\qi-\qphi)}{\qxi_\qj\dsum{\qi\in\qI_\qj}{} \qxi_\qi (\qphi_0^\qi-\qphi)}&\\[5ex]
&\le&
2 \qE_3 + 2\qnI^2\, \qtaulambda\, \qa{}{\qphi}{\qphi},& \text{(by Remark~\ref{rem:est_local})}
\end{array}
\end{equation}
where we have again used Schwarz' inequality.
Note, that
$$
\begin{array}{rclr}
E_3&=&
\dsum{\qj=1}{\qnbx}\qa{\qOmega_\qj}{\dsum{\ql\in\qI_\qj\backslash\{\qj\}}{}\qxi_\ql\dsum{\qi\in\qI_\qj}{} \qxi_\qi (\qphi_0^\qi-\qphi)}{\dsum{\ql\in\qI_\qj\backslash\{\qj\}}{}\qxi_\ql\dsum{\qi\in\qI_\qj}{} \qxi_\qi (\qphi_0^\qi-\qphi)}& \text{(by (A4))}\\
&\le&
\qnI\dsum{\qj=1}{\qnbx}\dsum{\ql\in\qI_\qj\backslash\{\qj\}}{}\qa{\qOmega_\qj}{\qxi_\ql\dsum{\qi\in\qI_\qj}{} \qxi_\qi (\qphi_0^\qi-\qphi)}{\qxi_\ql\dsum{\qi\in\qI_\qj}{} \qxi_\qi (\qphi_0^\qi-\qphi)} &\text{(by Schwarz' inequality).}\\
\end{array}
$$
Since
$$
\begin{array}{rcl}
\qa{\qOmega_\qj}{\qxi_\ql\dsum{\qi\in\qI_\qj}{} \qxi_\qi (\qphi_0^\qi-\qphi)}{\qxi_\ql\dsum{\qi\in\qI_\qj}{} \qxi_\qi (\qphi_0^\qi-\qphi)}
&=&
\qa{\qOmega_\ql}{\qxi_\ql\dsum{\qi\in\qI_\qj\cap\qI_\ql}{} \qxi_\qi (\qphi_0^\qi-\qphi)}{\qxi_\ql\dsum{\qi\in\qI_\qj\cap\qI_\ql}{} \qxi_\qi (\qphi_0^\qi-\qphi)}\\
&\le&
\qa{\qOmega_\ql}{\qxi_\ql\dsum{\qi\in\qI_\ql}{} \qxi_\qi (\qphi_0^\qi-\qphi)}{\qxi_\ql\dsum{\qi\in\qI_\ql}{} \qxi_\qi (\qphi_0^\qi-\qphi)},
\end{array}
$$
we thus have
\begin{equation}
\label{eq:proof2_4}
\begin{array}{rclr}
E_3
&\le&
\qnI\dsum{\qj=1}{\qnbx}\dsum{\ql\in\qI_\qj\backslash\{\qj\}}{}
\qa{\qOmega_\ql}{\qxi_\ql\dsum{\qi\in\qI_\ql}{} \qxi_\qi (\qphi_0^\qi-\qphi)}{\qxi_\ql\dsum{\qi\in\qI_\ql}{} \qxi_\qi (\qphi_0^\qi-\qphi)}\\
&=&
\qnI\dsum{\ql=1}{\qnbx}\dsum{\qj\in\qI_\ql\backslash\{\ql\}}{}
\qa{\qOmega_\ql}{\qxi_\ql\dsum{\qi\in\qI_\ql}{} \qxi_\qi (\qphi_0^\qi-\qphi)}{\qxi_\ql\dsum{\qi\in\qI_\ql}{} \qxi_\qi (\qphi_0^\qi-\qphi)}\\
&\le&
\qnI^2\dsum{\ql=1}{\qnbx}
\qa{\qOmega_\ql}{\qxi_\ql\dsum{\qi\in\qI_\ql}{} \qxi_\qi (\qphi_0^\qi-\qphi)}{\qxi_\ql\dsum{\qi\in\qI_\ql}{} \qxi_\qi (\qphi_0^\qi-\qphi)}&  \text{($\#(\qI_\ql\backslash\{\ql\})\le\qnI$)}\\
&\le&
\qnI^4\, \qtaulambda\, \qa{}{\qphi}{\qphi} &\text{(by Remark~\ref{rem:est_local})}.\\
\end{array}
\end{equation}

Combining \eqref{eq:proof2_1}, \eqref{eq:proof2_2}, and \eqref{eq:proof2_4} we obtain
$$
\qa{}{\qphi_0}{\qphi_0} \le \(2+4(\qnI^4+\qnI^2) \qtaulambda \)\qa{}{\qphi}{\qphi}.
$$
\end{proof}

Combining Lemmas~\ref{prop:local_est} and \ref{prop:coarse_est} we have the following:
\begin{theorem}
\label{cor:stable_decom}
Assume (A1)--(A5) hold. Then, the decomposition defined in \eqref{eq:stab_decom_specific} satisfies
\begin{equation}
\label{eq:est_stable_decom}
\dsum{\qj=0}{\qnbx}\qa{}{\qphi_\qj}{\qphi_\qj} \le (2+\qC\, \qtaulambda) \qa{}{\qphi}{\qphi},
\end{equation}
where $\qC$ only depends on $\qnI$.
\end{theorem}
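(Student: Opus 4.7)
The plan is immediate, since the theorem is essentially just the sum of the two preceding lemmas. I would begin by splitting the left-hand side of \eqref{eq:est_stable_decom} as
\begin{equation*}
\dsum{\qj=0}{\qnbx}\qa{}{\qphi_\qj}{\qphi_\qj} = \qa{}{\qphi_0}{\qphi_0} + \dsum{\qj=1}{\qnbx}\qa{}{\qphi_\qj}{\qphi_\qj},
\end{equation*}
and then bound each piece by the relevant lemma. Lemma~\ref{prop:coarse_est} controls the first summand by $(2+\qC_1\, \qtaulambda)\, \qa{}{\qphi}{\qphi}$, while Lemma~\ref{prop:local_est} controls the second by $\qC_2\, \qtaulambda\, \qa{}{\qphi}{\qphi}$, both constants depending only on $\qnI$. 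Adding the two bounds and collapsing $\qC_1+\qC_2$ into a single generic constant $\qC$ of the same type yields exactly \eqref{eq:est_stable_decom}.

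The only things worth checking are bookkeeping matters: that the functions $\qphi_0$ and $\qphi_\qj$ appearing in the decomposition \eqref{eq:stab_decom_specific} are the same ones entering the two lemmas (they are, by the construction in \eqref{eq:stab_decom_coarse}--\eqref{eq:stab_decom_fine}), and that the membership conditions $\qphi_0\in\qVscrH$ and $\qphi_\qj\in\qVscr_0(\qOmega_\qj)$ required by \eqref{eq:stable_decom_1} are satisfied; both are built into the definitions \eqref{eq:stab_decom_coarse} and \eqref{eq:stab_decom_fine}, where $\qphi_\qj = (\qxi_\qj(\qphi-\qphi_0))|_{\qOmega_\qj}\in\qVscr_0(\qOmega_\qj)$ holds by assumption (A4)(c).

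There is no genuine obstacle in this statement: all of the analytic work has already been absorbed into Lemmas~\ref{prop:local_est} and \ref{prop:coarse_est} (and in particular into the spectral argument \eqref{eq:est_inv_eigen} and the overlap-counting used to handle $\qE_1$, $\qE_2$, $\qE_3$ in their proofs). The present theorem is therefore best viewed as a corollary that packages the two lemmas into the stable-decomposition form required by the abstract Schwarz framework of \cite{TW05}, from which the claimed condition-number bound for the additive preconditioner follows by the standard argument referenced after \eqref{eq:stable_decom}.
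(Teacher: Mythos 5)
Your proof is correct and takes exactly the same route as the paper, which simply states the theorem as an immediate consequence of "combining Lemmas~\ref{prop:local_est} and~\ref{prop:coarse_est}." Splitting off the $\qj=0$ term, applying Lemma~\ref{prop:coarse_est} to it and Lemma~\ref{prop:local_est} to the remaining sum, then adding the bounds and absorbing $\qC_1+\qC_2$ into a single $\qnI$-dependent constant is precisely the intended argument.
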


\section{Applications}
\label{sec:applications}
To apply the theory developed in Sections~\ref{sec:framework} and \ref{sec:stable_decomposition} to some particular problem we need to verify assumptions (A1)--(A5). 
Once this is established, we can conclude that the condition number of the corresponding additive Schwarz preconditioned system has an upper bound which depends only on $\qnI$ and $\qtaulambda$. Thus, if we can show the validity of (A5) with $\qtaulambda$  chosen independently of certain problem parameters and mesh parameter $\qH$, the condition number will also be bounded independently of these problem parameters and the mesh parameter. 

In addition to verifying assumptions (A1)--(A5) we also have to make sure that the number of ``small'' eigenvalues in \eqref{eq:gen_eigenval}, i.e., those below the threshold $\qtaulambda^{-1}$, is a small in order for our method to be practically beneficial. Note, that the choice of $\qtaulambda^{-1}>0$ and thus the number of small eigenvalues is not unique. Nevertheless, for a certain choice of $\qtaulambda^{-1}$ and a given problem we may still aim to establish that the number of eigenvalues below the chosen threshold is uniformly bounded with respect to changes in specific problem parameters. In this case the additive Schwarz preconditioner corresponding to the stable decomposition \eqref{eq:stab_decom_specific} has a coarse space whose dimension is uniformly bounded with respect to these parameters.

\subsection{The Scalar Elliptic Case -- Galerkin Formulation}
\label{sec:ScalarElliptic}
As a first application of the abstract framework developed above, we consider the scalar elliptic equation
\begin{equation}
\label{eq:scalar_elliptic}
-\div ( \qkappa \grad \qphi ) = \qf, \quad  \qbx \in \qOmega, \text{ and } \quad \qphi=0, \quad  \qbx \in \qdOmega,
\end{equation}
where $\qkappa\in L^\infty(\qOmega)$ is a positive function, which may have large variation, $\qphi\in H^1_0(\qOmega)$, and $\qf\in L^2(\qOmega)$.
Note that we can always reduce to the case of homogeneous boundary conditions by introducing an appropriate right hand side.

With $\qVscr_0:=H^1_0(\qOmega)$, the variational formulation corresponding to \eqref{eq:scalar_elliptic} is: 
Find $\qphi\in\qVscr_0$ such that for all $\qpsi\in\qVscr_0$ 
$$
\qaSE{}{\qphi}{\qpsi}:=\dint{\qOmega}{}\qkappa(\qbx) \grad\qphi\cdot\grad\qpsi \qdbx = \dint{\qOmega}{}\qf \qpsi \qdbx.
$$
Let $\qVscr_0(\qOmega_\qj):= H^1_0(\qOmega_\qj)\subset \qVscr_0|_{\qOmega_\qj}$ for any $\qj=1,\ldots,\qnbx$.
Choosing $\{\qxi_\qj\}_{\qj=1}^{\qnbx}$ the Lagrange finite element functions of degree one corresponding to $\qTcalH$, we readily see that for the scalar elliptic case, i.e., when setting $\qa{}{\cdot}{\cdot}=\qaSE{}{\cdot}{\cdot}$, assumptions (A1)--(A4) are satisfied.

Let us for now assume that $\qkappa$ assumes only two values. More precisely,
$$
\qkappa(\qbx)=
\left\{
\begin{array}{ll}
\qkappamin &\text{in }\qOmegas\\
\qkappamax &\text{in }\qOmegap,
\end{array}
\right. \text{ with } \qOmegasc\cup\qOmegapc=\qOmegac \text{ and } \qkappamax\gg\qkappamin>0.
$$
Without loss of generality, we may take $\qkappamin = 1$.
Now we wish to establish the existence of $\qtaulambda^{-1}$ such that the number of eigenvalues below this threshold is independent of the contrast $\qkappamax/\qkappamin$, which is the problem parameter of interest in this situation.

By the well-known min-max principle \cite{Reed_Simon}, we know that the $\qi$-th eigenvalue of \eqref{eq:gen_eigenval} is given by
\begin{equation}
\label{eq:min_max}
\qlambda^\qj_\qi=\min_{\qVscr_\qi(\qOmega_\qj)}\max_{\qpsi\in\qVscr_\qi(\qOmega_\qj)}\dfrac{\qaSE{\qOmega_\qj}{\qpsi}{\qpsi}}{\qmsumSE{\qOmega_\qj}{\qpsi}{\qpsi}},
\end{equation}
where $\qVscr_\qi(\qOmega_\qj)$ is any $\qi$-dimensional subspace of $\qVscr(\qOmega_\qj)$. (Here $\qmsumSE{\qOmega_\qj}{\cdot}{\cdot}$ is defined according to \eqref{eq:def_msum} with $\qa{}{\cdot}{\cdot}$ replaced by $\qaSE{}{\cdot}{\cdot}$.)

\begin{figure}
\resizebox{.25\textwidth}{!}{\input{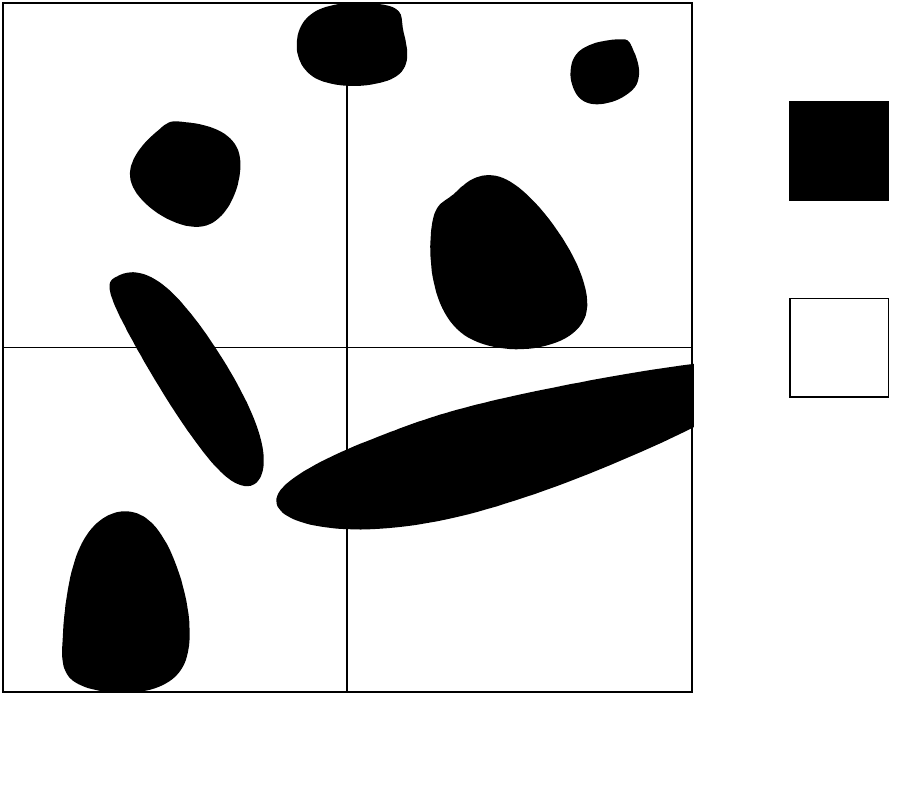_t}}
\caption{Subdomain with connected components of $\qOmegas_\qj$. In the present configuration $\qL_\qj=7$.}
\label{fig:domain_high_low}
\end{figure}
We denote $\qOmegas_\qj:=\qOmegas\cap\qOmega_\qj$ and $\qOmegap_\qj:=\qOmegap\cap\qOmega_\qj$.
Now, for each $\qi=1,\ldots,\qL_\qj$ let $\qOmegas_{\qj,\qi}$ denote the $\qi$-th connected component of $\qOmegas_\qj$, where $\qL_\qj$ denotes the total number of connected components of $\qOmegas_\qj$. If $\qOmegas_\qj=\emptyset$, we set $\qOmegas_{\qj,1}=\qOmega_\qj$ and $\qL_\qj=1$.

Now, we define the following subspace of $\qVscr(\qOmega_\qj)$:
\begin{equation}
\label{eq:comp_space}
\qVscrcSE(\qOmega_\qj):=\left\{\qphi\in\qVscr(\qOmega_\qj)\,|\, ~~\dint{\qOmegas_{\qj,\qi}}{}\qphi\qdbx=0\ \text{, for }\qi=1,\ldots,\qL_\qj\right\}.
\end{equation}
It is straightforward to see that any $\qL_\qj +1$-dimensional subspace of $\qVscr(\qOmega_\qj)$ has a non-trivial intersection with $\qVscrcSE(\qOmega_\qj)$.
Thus, by \eqref{eq:min_max} there exists a non-zero $\qphi\in\qVscrcSE(\qOmega_\qj)$ such that
\begin{equation}
\label{eq:est_lambda}
\qlambda^\qj_{\qL_\qj+1}\ge \dfrac{\qaSE{\qOmega_\qj}{\qphi}{\qphi}}{\qmsumSE{\qOmega_\qj}{\qphi}{\qphi}}.
\end{equation}
Using the definitions of $\qaSE{\qOmega_\qj}{\cdot}{\cdot}$ and $\qmsumSE{\qOmega_\qj}{\cdot}{\cdot}$ we note that
\begin{equation}
\label{eq:est_m}
\begin{array}{rcl}
\qmsumSE{\qOmega_\qj}{\qphi}{\qphi}
&=& \dsum{\qi\in\qI_\qj}{} \qaSE{\qOmega_\qj}{\qxi_\qj\qxi_\qi \qphi}{\qxi_\qj\qxi_\qi \qphi}\\
&=& \dsum{\qi\in\qI_\qj}{} \dint{\qOmega_\qj}{} \qkappa |\grad(\qxi_\qj\qxi_\qi \qphi)|^2 \qdbx\\
&\le& 2  \dsum{\qi\in\qI_\qj}{} \dint{\qOmega_\qj}{} \( \qkappa | (\grad(\qxi_\qj\qxi_\qi) \qphi |^2 + |\qxi_\qj\qxi_\qi \grad\qphi|^2 \) \qdbx \\
&\le& \qC \dsum{\qi\in\qI_\qj}{} \dint{\qOmega_\qj}{} \qkappa \( \( \qH^{-1} \qphi\)^2 + |\grad\qphi|^2\)\qdbx \\
&\le& \qC \qnI \( \qH^{-2} \dint{\qOmega_\qj}{} \qkappa \qphi^2 \qdbx + \qaSE{\qOmega_\qj}{\qphi}{\qphi}\), \\
\end{array}
\end{equation}
where $\qC$ only depends on the choice of the partition of unity.
Furthermore, we observe that
$$
\label{eq:est_eig_2}
\begin{array}{rcl}
\dint{\qOmega_\qj}{}\qkappa \qphi^2 \qdbx
&=& \dint{\qOmegap_\qj}{}\qphi^2\qdbx + \qkappamax\dsum{\qi=1}{\qL_\qj}\dint{\qOmegas_{\qj,\qi}}{} \qphi^2 \qdbx \\
&\le& \qC \qH^2 \( \dint{\qOmegap_\qj}{}|\grad\qphi|^2\qdbx + \qkappamax\dsum{\qi=1}{\qL_\qj}\dint{\qOmegas_{\qj,\qi}}{} |\grad\qphi|^2 \qdbx \) \\
&=& \qC \qH^2 \dint{\qOmega_\qj}{} \qkappa |\grad\qphi|^2\qdbx,
\end{array}
$$
where we have used Poincar\'e's inequality, which is possible since $\qphi\in\qVscrcSE(\qOmega_\qj)$. Here, $\qC$ is a constant which only depends on the geometries of $\qOmegap_\qj$ and $\qOmegas_{\qj,\qi}$, $\qi=1,\ldots,\qL_\qj$. Thus, we obtain
$$
\qmsumSE{\qOmega_\qj}{\qphi}{\qphi} \le \qC \qaSE{\qOmega_\qj}{\qphi}{\qphi},
$$
which together with \eqref{eq:est_lambda} yields a uniform (with respect to $\qkappamax/\qkappamin$ and $\qH$) lower bound for $\qlambda^\qj_{\qL_\qj+1}$. Thus, we have verified assumption (A5) with $\qtaulambda$ independent of $\qkappamax/\qkappamin$ and $\qH$. In particular we see that for a suitably chosen $\qtaulambda$ the number of generalized eigenvalues below $\qtaulambda^{-1}$ and satisfying \eqref{eq:gen_eigenval} is bounded from above by the number of connected components in $\qOmegas_\qj$, i.e., $\qL_\qj$.

\subsection{The Scalar Elliptic Case -- Mixed Formulation}
\label{sec:Darcy}

In this Subsection we consider the mixed formulation of the scalar elliptic equation, also known as Darcy's equations, in 2 spatial dimensions, i.e., $\qn=2$, modeling flow in porous media
\begin{equation}
\label{eq:Darcy}
\left\{
\begin{array}{rcll}
\grad\qp+\qmu\qkappa^{-1}\qbu &=& \qbf&\text{in } \qOmega,\\
\div\qbu&=&0&\text{in }\qOmega,\\
\qbu\cdot\qbn&=&0&\text{on }\qdOmega.
\end{array}
\right. 
\end{equation}
Here, $\qp\in L^2_0(\qOmega):=L^2(\qOmega)/\qbR$ denotes the pressure, $\qbu\in H(div;\, \qOmega):=\{\qbv\in (L^2(\qOmega))^2\, |\, \div \qbv\in L^2(\qOmega)\}$ is the velocity, $\qbf\in (L^2(\qOmega))^2$ is a forcing term, and $\qbn$ denotes the unit outer normal vector to $\qdOmega$. The viscosity $\qmu$ is a positive constant, and $\qkappa\in L^\infty(\qOmega)$ is a positive function.
With $H_0(div;\, \qOmega):= \{\qbv\in H(div;\, \qOmega)\,|\, \qbv\cdot\qbn=0 \text{ on } \qdOmega\}$ the variational formulation of Darcy's problem is given by: Find $(\qbu,\, \qp)\in (H_0(div;\, \qOmega),\, L^2_0(\qOmega))$ such that for all $(\qbv,\, \qq)\in (H_0(div;\, \qOmega),\, L^2_0(\qOmega))$ we have
\begin{equation}
\label{eq:Darcy_primal}
\dint{\qOmega}{}\qmu \qkappainv \qbu \cdot \qbv \qdbx - \dint{\qOmega}{}\qp \div \qbv \qdbx - \dint{\qOmega}{}\qq \div \qbu \qdbx = \dint{\qOmega}{}\qbf \cdot \qbv \qdbx.
\end{equation}
It is well-known (see e.g.\ \cite[Chapter 12, p.\ 300]{BS02}) that problem \eqref{eq:Darcy_primal} is equivalent to the following problem: find $\qbu$ in the subspace  
$ H_{0}(div_0; \, \qOmega):=\{ \qbv \in H_0(div;\,\qOmega)\, |\, \div \qbv \equiv 0\}$ such that
$$
\dint{\qOmega}{}\qmu \qkappainv \qbu \cdot \qbv \qdbx = \dint{\qOmega}{}\qbf \cdot \qbv \qdbx
\quad \forall v \in H_{0}(div_0;\, \qOmega)
$$

Let us additionally assume that $\qOmega$ is simply connected. Then, we have (see e.g.\ \cite{Girault-R-1979}) that there exists exactly one $ \qphi \in H^1_0(\qOmega)$ such that $\curl \qphi=\qbu$, where $\curl \qphi:=\[\frac{\partial \qphi}{\partial \qx_2},~ -\frac{\partial \qphi}{\partial \qx_1} \]$.
This leads to the variational form of Darcy's problem in stream function formulation: Find $\qphi\in \qVscr_0:= H^1_0(\qOmega)$ such that for all $\qpsi\in \qVscr_0$ we have
\begin{equation}
\label{eq:Darcy_stream}
\qaD{}{\qphi}{\qpsi}:=\dint{\qOmega}{} \qmu \qkappainv\curl \qphi\cdot\curl \qpsi \qdbx = \dint{\qOmega}{}\qbf \cdot (\curl \qpsi) \qdbx.
\end{equation}
Note that $\qaD{}{\qphi}{\qpsi}=\dint{\qOmega}{} \qmu \qkappainv\grad \qphi\cdot\grad \qpsi \qdbx$. Thus, for $\qVscr_0(\qOmega_\qj)$ and $\qxi_\qj$ as chosen in Subsection~\ref{sec:ScalarElliptic} for $\qj=1,\ldots,\qnbx$ we can readily verify assumptions (A1)--(A4). Let us assume that, as above, $\qkappa$ only assumes two values. Then, we can perform exactly the same argument as in Subsection~\ref{sec:ScalarElliptic} (with $\qkappa$ replaced by $\qmu\qkappainv$) to establish the validity of (A5).
This in turn establishes the robustness (with respect to $\qkappamax/\qkappamin$ and $\qH$) of the stable decomposition \eqref{eq:stab_decom_specific} corresponding to the stream function formulation with its bilinear form $\qaD{}{\cdot}{\cdot}$. Thus, we may robustly precondition \eqref{eq:Darcy_stream}, namely, solve for $\qphi$ and recover $\qbu=\curl \qphi$ from \eqref{eq:Darcy_primal}.

An equivalent approach, that we use in Section~\ref{sec:numerics} to compute a solution of \eqref{eq:Darcy_primal}, is somewhat different and outlined in the following Remark.

\begin{remark}
\label{rem:stab_decom_mixed}
Instead of solving the stream function formulation for $\qphi$ and then recovering $\qbu=\curl \qphi$, one may equivalently use the coarse space corresponding to \eqref{eq:Darcy_stream} to construct a coarse space corresponding to \eqref{eq:Darcy_primal} by applying $\curl$ to the coarse stream basis functions. This then yields an equivalent robust additive Schwarz preconditioner for \eqref{eq:Darcy_primal} (for details see \cite[Section 10.4.2]{Mat08}).
\end{remark}

\subsection{Stokes' Equation}
\label{sec:Stokes}

As for the mixed form of the elliptic equation we assume that $\qOmega\subset \qbR^2$ is simply connected. Then we consider Stokes' equations modeling slow viscous flows
$$
\left\{
\begin{array}{rcll}
-\qmu\laplace\qbu+\grad\qp &=& \qbf&\text{in } \qOmega,\\
\div\qbu&=&0&\text{in }\qOmega,\\
\qbu&=&\bm{0}&\text{on }\qdOmega,
\end{array}
\right.
$$
where $\qp\in L_0^2(\qOmega)$, $\qbu\in (H^1_0(\qOmega))^2$, $\qbf\in (L^2(\qOmega))^2$, and $\qmu\in \qbR^+$. The variational formulation of the Stokes problem is: Find $(\qbu,\, \qp) \in ( (H^1_0(\qOmega))^2,\, L^2_0(\qOmega) )$ such that for all $(\qbv,\, \qq) \in ( (H^1_0(\qOmega))^2,\, L^2_0(\qOmega) )$ we have
\begin{equation}
\label{eq:Stokes_primal}
\dint{\qOmega}{}\qmu \grad \qbu : \grad \qbv \qdbx - \dint{\qOmega}{}\qp \div \qbv\qdbx -\dint{\qOmega}{}\qq \div\qbu \qdbx = \dint{\qOmega}{}\qbf \cdot \qbv \qdbx,
\end{equation}
where $\grad \qbu : \grad \qbv := \dsum{\qi,\qj=1}{2} \dfrac{\partial \qu_\qi}{\partial \qx_j} \dfrac{\partial \qv_\qi}{\partial \qx_j}$ denotes the usual Frobenius product.

Analogously to Section~\ref{sec:Darcy} we can formulate an equivalent problem for stream functions: Find $\qphi \in \qVscr_0:=\left\{\qpsi\in H^2(\qOmega)\cap H^1_0(\qOmega)\, |\, \frac{\partial \qpsi}{\partial \qbn}|_{\partial\qOmega}=0\right\}$ such that for all $\qphi \in \qVscr_0$ 
\begin{equation}
\label{eq:Stokes_stream}
\qaS{}{\qphi}{\qpsi}
:=\dint{\qOmega}{}\qmu\grad(\curl \qphi):\grad(\curl \qpsi) \qdbx = \dint{\qOmega}{}\qbf \cdot \curl \qpsi \qdbx.
\end{equation}
For a sufficiently regular partition of unity $\{\qxi_\qj\}_{\qj=1}^{\qnbx}$ and spaces $\qVscr_0(\qOmega_\qj)$, $\qj=1,\ldots,\qnbx$, defined as $\qVscr_0(\qOmega_\qj):=\left\{\qpsi\in H^2(\qOmega_\qj)\cap H^1_0(\qOmega_\qj)\, |\, \frac{\partial \qpsi}{\partial \qbn}|_{\partial\qOmega_\qj}=0\right\}$, we can readily verify (A1)--(A4). For the verification of (A5), we define for $\qj=1,\ldots,\qnbx$
$$
\qVscrcS(\qOmega_\qj):=\left\{\qphi\in\qVscr(\qOmega_\qj)\, | \, \dint{\qOmega_\qj}{} \qphi=0,
\, \, \dint{\qOmega_\qj}{} \grad \qphi\qdbx=\bm{0}\right\}.
$$
As above, it is straightforward to see that any $4$-dimensional subspace has a non-empty intersection with $\qVscrcS(\qOmega_\qj)$. Thus, by again using the min-max principle we see that there exists a $\qphi\in \qVscrcS(\qOmega_\qj)$ such that
\begin{equation}
\label{eq:eig_Stokes}
\qlambda^\qj_4\ge \dfrac{\qaS{\qOmega_\qj}{\qphi}{\qphi}}{\qmsumS{\qOmega_\qj}{\qphi}{\qphi}}.
\end{equation}
Using the definitions of $\qaS{\qOmega_\qj}{\cdot}{\cdot}$ and $\qmsumS{\qOmega_\qj}{\cdot}{\cdot}$ (see \eqref{eq:def_msum} with $\qa{}{\cdot}{\cdot}$ replaced by $\qaS{}{\cdot}{\cdot}$) we note that
$$
\begin{array}{rcl}
\qmsumS{\qOmega_\qj}{\qphi}{\qphi}
&=& \dsum{\qi\in\qI_\qj}{} \qaS{\qOmega_\qj}{\qxi_\qj\qxi_\qi \qphi}{\qxi_\qj\qxi_\qi \qphi}\\
&=& \dsum{\qi\in\qI_\qj}{} \dint{\qOmega_\qj}{} \qmu \Hess{\qxi_\qj\qxi_\qi\qphi}:\Hess{\qxi_\qj\qxi_\qi\qphi} \qdbx\\
&=& \dsum{\qi\in\qI_\qj}{} \dint{\qOmega_\qj}{} \qmu \dnorm{\qphi\Hess{\qxi_\qi\qxi_\qj} + \qxi_\qi\qxi_\qj \Hess{\qphi} + \grad(\qxi_\qi\qxi_\qj) \otimes \grad\qphi + \grad\qphi \otimes \grad(\qxi_\qi\qxi_\qj)}{\qFcal}^2 \qdbx \\
&\le& \qC \qnI \dint{\qOmega_\qj}{} \qmu (\qphi^2 \qH^{-4} + \Hess{\qphi}:\Hess{\qphi} + \qH^{-2} (\grad\qphi)^2) \qdbx,
\end{array}
$$
where $\qC$ only depends on the particular choice of the partition of unity. Here $\Hess{\qphi}:= \[\dfrac{\partial^2 \qphi}{\partial \qx_\qi \partial \qx_\qj}\]_{\qi,\qj=1,\, 2}$ denotes the Hessian of $ \qphi$, $\otimes$ the tensor product, and $\dnorm{\cdot}{\qFcal}$ denotes the Frobenius norm associated with the Frobenius product defined above. Since $\qphi\in\qVscrcS(\qOmega_\qj)$ we may apply Poincar\'e's inequality to $\qphi$ and its first derivatives. Thus, we obtain
\begin{equation}
\label{eq:est_eig_S1}
\qmsumS{\qOmega_\qj}{\qphi}{\qphi}
\le \qC \qnI \dint{\qOmega_\qj}{} \qmu \Hess{\qphi}:\Hess{\qphi} \qdbx = \qC \qaS{\qOmega_\qj}{\qphi}{\qphi},
\end{equation}
where $\qC$ only depends on the partition of unity and the shape of $\qOmega_\qj$ but is independent of $\qH$. Combining \eqref{eq:est_eig_S1} with \eqref{eq:eig_Stokes} we obtain
$
\qlambda^\qj_4\ge \qC
$
with $\qC$ independent of $\qH$. This verifies (A5) and thus establishes that the decomposition  \eqref{eq:stab_decom_specific} and the corresponding additive Schwarz preconditioner are robust with respect to $\qH$. As outlined in Remark~\ref{rem:stab_decom_mixed} we can also obtain a robust additive Schwarz preconditioner for \eqref{eq:Stokes_primal}.

\subsection{Brinkman's Equation}
\label{sec:Brinkman}

As for Darcy's and Stokes' problem, we assume that $\qOmega\subset \qbR^2$ is simply connected. Brinkman's problem modeling flows in highly porous media is given by (cf.\ \cite{Bri47_1})
\begin{equation}
\label{eq:Brinkman}
\left\{
\begin{array}{rcll}
-\qmu\laplace\qbu+\grad\qp+\qmu\qkappa^{-1}\qbu &=& \qbf&\text{in } \qOmega,\\
\div\qbu&=&0&\text{in }\qOmega,\\
\qbu&=&\bm{0}&\text{on }\qdOmega,
\end{array}
\right.
\end{equation}
where $\qp$, $\qbu$, $\qbf$, and $\qmu$ are chosen as in the Stokes' case and $\qkappa$ as in the Darcy case.
The variational formulation of the Brinkman problem is: Find $(\qbu,\, \qp) \in ( (H^1_0(\qOmega))^2,\, L^2_0(\qOmega) )$ such that for all $(\qbv,\, \qq) \in ( (H^1_0(\qOmega))^2,\, L^2_0(\qOmega) )$ we have
\begin{equation}
\label{eq:Brinkman_primal}
\dint{\qOmega}{}\qmu \grad \qbu : \grad \qbv \qdbx + \dint{\qOmega}{}\qmu \qkappainv \qbu \cdot \qbv \qdbx - \dint{\qOmega}{}\qp \div \qbv\qdbx -\dint{\qOmega}{}\qq \div\qbu \qdbx = \dint{\qOmega}{}\qbf \cdot \qbv \qdbx.
\end{equation}

Again, we adopt the setting of stream functions. For $\qVscr_0$ as in section~\eqref{sec:Stokes}, the variational stream function formulation reads: Find $\qphi \in \qVscr_0$ such that for all $\qpsi\in \qVscr_0$ we have
\begin{equation}
\label{eq:Brinkman_stream}
\qaB{}{\qphi}{\qpsi}:=\dint{\qOmega}{}\qmu \(\grad(\curl \qphi):\grad(\curl \qpsi) +\qkappainv \curl\qphi \cdot \curl\qpsi\) \qdbx = \dint{\qOmega}{}\qbf \cdot \curl \qpsi \qdbx.
\end{equation}
With $\qxi_\qj$ and $\qVscr_0(\qOmega_\qj)$ as in section~\ref{sec:Stokes} for $\qj=1,\ldots,\qnbx$ we readily verify (A1)--(A4).

Note that
$$
\qaB{}{\qphi}{\qpsi}= \qaS{}{\qphi}{\qpsi} + \qaD{}{\qphi}{\qpsi}\quad \text{ and }\quad \qmsumB{}{\qphi}{\qpsi} = \qmsumS{}{\qphi}{\qpsi} + \qmsumD{}{\qphi}{\qpsi},
$$
where $\qmsumB{}{\cdot}{\cdot}$ is defined according to \eqref{eq:def_msum} with $\qa{}{\cdot}{\cdot}$ replaced by $\qaB{}{\cdot}{\cdot}$.

Since for any $\qpsi\in\qVscr(\qOmega_\qj)$, $\qj=1,\ldots,\qnbx$,  we have \sloppy$\qmsumS{\qOmega_\qj}{\qpsi}{\qpsi}, \ \qmsumD{\qOmega_\qj}{\qpsi}{\qpsi}>0$ and \sloppy$\qaS{\qOmega_\qj}{\qpsi}{\qpsi},\ \qaD{\qOmega_\qj}{\qpsi}{\qpsi} \ge0$ we have
\begin{equation}
\label{eq:est_frac_split}
\dfrac{\qaB{\qOmega_\qj}{\qpsi}{\qpsi}}{\qmsumB{\qOmega_\qj}{\qpsi}{\qpsi}}
=
\dfrac{\qaS{\qOmega_\qj}{\qpsi}{\qpsi} + \qaD{\qOmega_\qj}{\qpsi}{\qpsi}}{\qmsumS{\qOmega_\qj}{\qpsi}{\qpsi} + \qmsumD{\qOmega_\qj}{\qpsi}{\qpsi}}
\ge
\min\left\{ \dfrac{\qaS{\qOmega_\qj}{\qpsi}{\qpsi}}{\qmsumS{\qOmega_\qj}{\qpsi}{\qpsi}},\ \dfrac{\qaD{\qOmega_\qj}{\qpsi}{\qpsi}}{\qmsumD{\qOmega_\qj}{\qpsi}{\qpsi}}\right\}.
\end{equation}
This is an immediate consequence of the following inequality, valid for $\qbeta_1,\qbeta_2 \ge 0$ and $\qbeta_3,\ \qbeta_4 >0$ 
$$
\dfrac{\qbeta_1 + \qbeta_2}{\qbeta_3 + \qbeta_4}
=
\dfrac{
\dfrac{\qbeta_1}{\qbeta_3\qbeta_4} + \dfrac{\qbeta_2}{\qbeta_3\qbeta_4}
}
{
\dfrac{1}{\qbeta_4} + \dfrac{1}{\qbeta_3}
}
\ge
\left\{
\begin{array}{ll}
\dfrac{\qbeta_2}{\qbeta_4}
, & \text{ if } \dfrac{\qbeta_1}{\qbeta_3}\ge \dfrac{\qbeta_2}{\qbeta_4}\\
\dfrac{\qbeta_1}{\qbeta_3}
, & \text{ if } \dfrac{\qbeta_2}{\qbeta_4} \ge \dfrac{\qbeta_1}{\qbeta_3}\\
\end{array}
\right\}
\ge
\min\left\{ \dfrac{\qbeta_1}{\qbeta_3},\ \dfrac{\qbeta_2}{\qbeta_4} \right\}. 
$$
Combining \eqref{eq:est_frac_split} with the results from Subsections~\ref{sec:Darcy} and \ref{sec:Stokes} we obtain
$$
\qlambda^\qj_{\max\{\qL_{\qj}+1,~4\}} \ge \qC,
$$
where $\qC$ is independent of $\qH$ and $\qkappamax/\qkappamin$, and where $\qL_\qj$ is chosen as in Subsection~\ref{sec:Darcy}. This verifies (A5) and thus establishes that the decomposition introduced by \eqref{eq:stab_decom_specific} and the corresponding additive Schwarz preconditioner are robust with respect to $\qH$ and $\qkappamax/\qkappamin$. As for the Darcy and the Stokes case, we can also obtain an equivalent robust additive Schwarz preconditioner for \eqref{eq:Brinkman_primal} (see Remark~\ref{rem:stab_decom_mixed}).


\section{Reducing the Dimension of the Coarse Space}
\label{sec:dim_coarse}
In the exposition above, no assumptions except (A4) and (implicitly) (A5) were made about the choice of the partition of unity $\{\qxi_\qj\}_{\qj=1}^{\qnbx}$. In this section, we investigate possibilities of making a particular choice of the partition of unity denoted by $\{\qxit_\qj\}_{\qj=1}^{\qnbx}$ that results in a reduction of the dimension of the coarse space $\qVscr_\qH$. The idea is that by replacing $\{\qxi_\qj\}_{\qj=1}^{\qnbx}$ with $\{\qxit_\qj\}_{\qj=1}^{\qnbx}$  one can avoid the asymptotically small eigenvalues for those connected components of $\qOmegas$ which do not touch the boundary of any coarse cell $\qTH\in\qTcalH$. For this, we again assume the scalar elliptic setting as in Section~\ref{sec:ScalarElliptic}.

Let $\{\qxi_\qj\}_{\qj=1}^{\qnbx}$ be a standard partition of unity as above. For each $\qj=1,\ldots,\qnbx$ and each $\qTH\subset \qOmega_\qj$, let $\qxit_\qj|_{\qTH}$ be a solution of
$$
-\div(\qkappa \grad \qxit_\qj) = 0,\quad \text{in } \qTH, \qquad \qxit_\qj = \qxi_\qj, \quad \text{on } \partial \qTH.
$$
The corresponding variational formulation reads: Find $\qxit_\qj|_{\qTH}\in H^1_0(\qTH)+\qxi_\qj$ such that for all $\qpsi\in H^1_0(\qTH)$ we have
\begin{equation}
\label{eq:xi_multiscale}
\qaSE{\qTH}{\qxit_\qj}{\qpsi}=0,\quad \forall\, \qpsi\in H^1_0(\qTH).
\end{equation}
We set $\qxit_\qj\equiv 0$ in $\qOmega\backslash\qOmega_\qj$ and check whether with $\{\qxit_\qj\}_{\qj=1}^{\qnbx}$ instead of $\{\qxi_\qj\}_{\qj=1}^{\qnbx}$ (A4) and (A5) are satisfied.

As in Subsection~\ref{sec:ScalarElliptic} let $\qOmegas_{\qj,\qk}$, $\qk=1,\ldots,\qL_\qj$ be the $\qk$-th connected component of $\qOmegas_\qj$. Without loss of generality we may assume a numbering such that $\qOmegas_{\qj,\qk}$ for $\qk=1,\ldots,\qLt_\qj (\le \qL_\qj)$ is a connected component for which $\qOmegasc_{\qj,\qk}\cap(\partial\qTH\backslash \qdOmega_\qj)\ne \emptyset$ for some $\qTH\subset\qOmega_\qj$. Note, that in general $\qLt_\qj\le\qL_\qj$ and that $\qL_\qj -\qLt_\qj$ is precisely the number of connected components of $\qOmegas_\qj$ which do not touch an edge of $\qTH\subset\qOmega_\qj$ interior to $\qOmega_\qj$, i.e., for $\qi=\qLt_\qj+1,\ldots, \qL_\qj$ we have that $\qOmegasc_{\qj,\qi}\cap (\partial\qTH\backslash \qdOmega_\qj) = \emptyset$.
\begin{figure}
\resizebox{.5\textwidth}{!}{\input{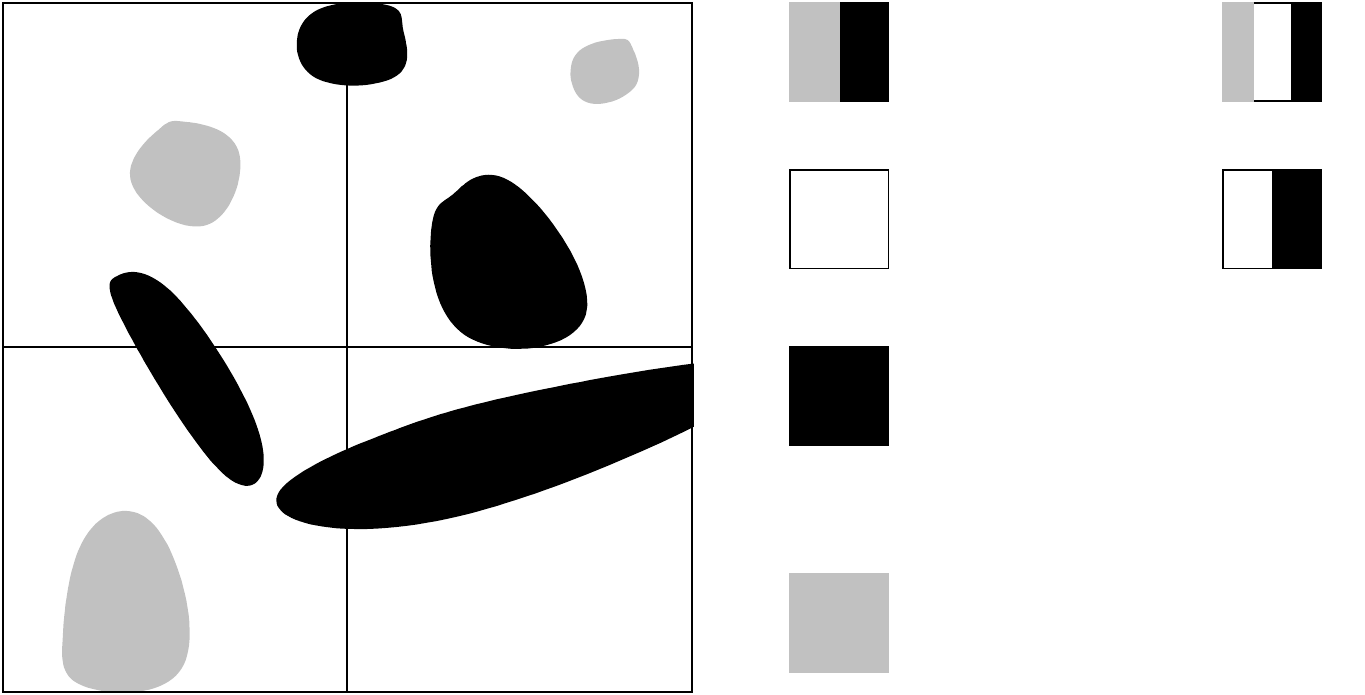_t}}
\caption{Subdomain with connected components of $\qOmegas_\qj$. The black connected components are those touching an edge interior to $\qOmega_\qj$. The grey connected components are the remaining ones. In this configuration $\qLt_\qj=4$ and $\qL_\qj=7$.}
\label{fig:domain_high_low_gr}
\end{figure} 
We also define $\qOmegat_\qj:=\qOmega_\qj \backslash (\dcup{\qk=\qLt_\qj+1}{\qL_\qj}\qOmegasc_{\qj,\qk} )$. These notations are illustrated in Figure~\ref{fig:domain_high_low_gr}.

To proceed we make the following assumption:
\begin{itemize}
\item[\qAft] For
$\qxit_\qj$ defined as above, we have 
$$\dnorm{\grad \qxit_\qj}{L^\infty(\qOmega_\qj)}\le \qC \qH^{-1}~~\mbox{ and } ~~ \dnorm{\qkappamax \grad\qxit_\qj}{L^\infty(\qOmega_\qj \backslash \qOmegat_\qj)} \le \qC \qH^{-1},\qj=1,\ldots,\qnbx,$$  
where $\qC$ is independent of $\qkappamax/\qkappamin$ and $\qH$.
\end{itemize}
Note that by \cite[Lemma~3.1]{EILRW08} $\dnorm{\grad \qxit_\qj}{L^2(\qOmega_\qj)}^2\le \qC \qH^{\qn-2}$ and that by essentially the same argument as in \cite[Theorem 4.3 and 4.5]{IvanRob} we have that $\dnorm{\qkappamax \grad\qxit_\qj}{L^2(\qOmega_\qj\backslash \qOmegat_\qj)}^{2} \le \qC \qH^{\qn-2}$, where as above $\qn$ denotes the spatial dimension. To the best of our knowledge, obtaining rigorous $L^\infty$-estimates as stated in \qAft\ is still an open problem and beyond the scope of this paper. Nevertheless, we consider reducing the verification of (A4) and (A5) to establishing \qAft\ a significant improvement.

It is easy to see that $\dsum{\qj=1}{\qnbx}\qxit_\qj\equiv 1$ and that $\supp(\qxit_\qj)=\qOmegac_\qj$ for any $\qj=1,\ldots,\qnbx$. Thus, to establish the validity of (A4) it remains to verify that $\qxit_\qj\qpsi\in\qVscr_0=H^1_0(\qOmega)$ and $(\qxit_\qj \qpsi)|_{\qOmega_\qj}\in \qVscr_0(\qOmega_\qj)$ for all $\qpsi\in \qVscr_0$. For this we restrict to the case of two spatial dimensions, i.e., $\qn=2$:

Note that for some $\qepsilon > 0$ we have that $\qxit_\qj\in H^{1+\qepsilon}(\qOmega_\qj)$ (cf.\ \cite{Grisvard85}). Thus, by \cite[Theorem 7.57]{Ada78} we know that $\qxit_\qj \in L^\infty(\qOmega_\qj)$. Using this and \qAft\ we see that
$$
\begin{array}{rl}
\dnorm{\grad (\qxit_\qj\qpsi)}{L^2(\qOmega_\qj)} & \le \dnorm{\qxit_\qj \grad \qpsi}{L^2(\qOmega_\qj)} + \dnorm{\qpsi \grad \qxit_\qj}{L^2(\qOmega_\qj)} \\[2ex]
 & \le \qC \(  \dnorm{\grad \qpsi}{L^2(\qOmega_\qj)} + \qH^{-1}\dnorm{\qpsi}{L^2(\qOmega_\qj)} \) < \infty.
\end{array}
$$
It is furthermore easy to see that $\supp(\qxit_\qj\qpsi) \subset \qOmegac_\qj$, which establishes (A4).

Similarly to section~\ref{sec:ScalarElliptic} we now define
\begin{equation}
\label{eq:VscrtcSE}
\qVscrtcSE(\qOmega_\qj):=\left\{\qphi\in\qVscr(\qOmega_\qj)\,|\, \dint{\qOmegas_{\qj,\qk}}{}\qphi\qdbx=0\ \text{, for }\qk=1,\ldots,\qLt_\qj\right\},
\end{equation}
and by the min-max principle we know that there exists a $\qphi\in\qVscrtcSE(\qOmega_\qj)$ such that
\begin{equation}
\label{eq:est_lambda2}
\qlambdat^\qj_{\qLt_\qj+1}\ge \dfrac{\qaSE{\qOmega_\qj}{\qphi}{\qphi}}{\qmsumtSE{\qOmega_\qj}{\qphi}{\qphi}},
\end{equation}
where $\qmsumtSE{\qOmega_\qj}{\cdot}{\cdot}$ is defined as $\qmsumSE{\qOmega_\qj}{\cdot}{\cdot}$ with $\{\qxi_\qj\}_{\qj=1}^{\qnbx}$ replaced by $\{\qxit_\qj\}_{\qj=1}^{\qnbx}$.

In order to obtain a uniform (with respect to $\qkappamax/\qkappamin$ and $\qH$) lower bound for $\qlambdat^\qj_{\qLt_\qj+1}$ and thus establish (A5), we need to verify that
\begin{equation}
\label{eq:est_eig_mult}
\qmsumtSE{\qOmega_\qj}{\qphi}{\qphi} \le \qC \qaSE{\qOmega_\qj}{\qphi}{\qphi},
\end{equation}
with $\qC$ independent of $\qkappamax/\qkappamin$ and $\qH$.

Revisiting estimate \eqref{eq:est_m}, we obtain
$$
\qmsumtSE{\qOmega_\qj}{\qphi}{\qphi}
\le 2  \dsum{\qi\in\qI_\qj}{} \dint{\qOmega_\qj}{} \qkappa |\grad(\qxit_\qj\qxit_\qi) \qphi|^2 \qdbx + 2\qaSE{\qOmega_\qj}{\qphi}{\qphi}.
$$

Thus, it suffices to bound for any $\qi\in \qI_\qj$
$$
\dint{\qOmega_\qj}{} \qkappa |\grad(\qxit_\qj\qxit_\qi) \qphi|^2 \qdbx = \underbrace{\dint{\qOmegat_\qj}{}\qkappa |\grad(\qxit_\qj\qxit_\qi) \qphi|^2 \qdbx}_{=:\qE_4} + \underbrace{\dint{\qOmega_\qj \backslash \qOmegat_\qj}{}\qkappa |\grad(\qxit_\qj\qxit_\qi) \qphi|^2 \qdbx}_{=:\qE_5} 
$$
by $\qaSE{\qOmega_\qj}{\qphi}{\qphi}$.
To avoid unnecessary technicalities, we make the simplifying assumption that each connected component of $\qOmegat_\qj$ contains at least one $\qOmegas_{\qj,\qk}$ with $\qk=1,\ldots,\qLt_\qj$. If this assumption is violated, one simply needs to introduce additional conditions in \eqref{eq:VscrtcSE} ensuring that the average of functions is zero over each connected component of $\qOmegat_\qj$ that does not contain any $\qOmegas_{\qj,\qk}$ with $\qk=1,\ldots,\qLt_\qj$.

Assuming \qAft, we can find the required estimate of $\qE_4$ as follows:
Proceeding as in Subsection~\ref{sec:ScalarElliptic} we see
$$
\qE_4
\le \qC\,\, \qH^{-2} \dint{\qOmegat_\qj}{} \qkappa \qphi^2 \qdbx
\le \qC\,\, \dint{\qOmegat_\qj}{}\qkappa |\grad \qphi |^2 \qdbx 
\le \qC\,\, \qaSE{\qOmega_\qj}{\qphi}{\qphi},
$$
where we have used Poincar\'e's inequality, which is possible since $\qphi\in\qVscrtcSE(\qOmega_\qj)$. Due to \qAft, $\qC$ can be chosen independently of $\qkappamax/\qkappamin$ and $\qH$, but it may depend on the geometry of $\qOmegat_\qj$.

For an estimate of $\qE_5$ note that by \qAft\
$$
\begin{array}{rcl}
\qE_5 & \le & 2 \qkappamax \dint{\qOmega_{\qj}\backslash \qOmegat_{\qj}}{} \( |\grad \qxit_\qj|^2 + 
|\grad \qxit_\qi|^2 \) \qphi^2 \qdbx\\
& \le & \qC \qH^2 \dint{\qOmega_{\qj}\backslash \qOmegat_{\qj}}{}\qphi^2 \qdbx \le \qC \dint{\qOmega_{\qj}}{} |\grad\qphi|^2 \qdbx \le \qC \qaSE{\qOmega_\qj}{\qphi}{\qphi},
\end{array}
$$
where we have used Poincar\'e's inequality. This establishes \eqref{eq:est_eig_mult}, which yields the validity of (A5).

In analogy to \eqref{eq:coarse} we define the coarse space, called further multiscale spectral coarse space, that is constructed using $\{\qxit_\qj\}_{\qj=1}^{\qnbx}$ instead of $\{\qxi_\qj\}_{\qj=1}^{\qnbx}$ by
\begin{equation}
\label{eq:coarse_space_tilde}
\qVscrtH:=\qspan\{\qxit_\qj \widetilde \qvarphi^\qj_\qi\, |\, ~~\qj=1,\ldots,\qnbx \text{ and } \qi=1,\ldots,\qLt_\qj\},
\end{equation}
where $ \widetilde \qvarphi^\qj_\qi$ are given by \eqref{eq:gen_eigenval} with $\qmsum{\qOmega_j}{\cdot}{\cdot}$ replaced by $\qmsumtSE{\qOmega_j}{\cdot}{\cdot}$.

\begin{remark}
We note that for any subdomain $\qOmega_\qj$ with $\qOmegac_\qj \cap \qdOmega = \emptyset$, we have that $(0, \bm{1}_{\qOmega_{\qj}})$ is an eigenpair of the generalized eigenvalue problem posed on $\qOmega_\qj$, where $\bm{1}_{\qOmega_{\qj}}$ denotes the constant $1$-function on $\qOmega_{\qj}$. Thus, all multiscale partition of unity functions $\qxit_\qj$ corresponding to subdomains $\qOmega_\qj$ with $\qOmegac_\qj \cap \qdOmega = \emptyset$ are basis functions of our coarse space $\qVscrtH$. This observation allows the interpretation of our method as a procedure that enriches a multiscale coarse space (given by the span of the multiscale partition of unity functions) by features that cannot be represented locally. These features are incorporated by those eigenfunctions corresponding to non-zero (but small) eigenvalues.
\end{remark}

\section{Numerical Results}
\label{sec:numerics}

\subsection{General setting}

In this section, we investigate the performance of the overlapping Schwarz domain decomposition method with coarse spaces discussed above when applied to some specific problems described in Section \ref{sec:applications}. We have implemented this preconditioner in C++ using the finite element library deal.ii (cf.\ \cite{BHK07}). Our goal is to experimentally establish the robustness of additive Schwarz preconditioners with respect to contrast $\qkappamax/\qkappamin$.
 The comparison is made  using some 
coarse spaces known in the literature and 
the coarse spaces introduced in this paper. 
Namely, we consider the following coarse spaces.
\begin{enumerate}
\item \textit{Standard coarse space}, 
denoted by $ \qVscrH^{st} := \qspan\{\qxi_\qj\,|\quad \qxi_\qj|_{\qdOmega_\qj}\equiv 0 \text{ for }\qj=1,\ldots,\qnbx\}$, of  
standard partition of unity functions corresponding to interior coarse mesh nodes, which were introduced above (also, e.g \cite{TW05});
\item \textit{Multiscale coarse space}, denoted by $ \qVscrH^{ms} := \qspan\{\qxit_\qj\,|\quad \qxit_\qj|_{\qdOmega_\qj}\equiv 0 \text{ for } \qj=1,\ldots,\qnbx\}$, of functions that over each  $\qOmega_j$ are solutions of problem \eqref{eq:xi_multiscale} and correspond to interior coarse mesh nodes (cf.\ \cite{IvanRob});
\item \textit{Spectral coarse space}, defined in \eqref{eq:coarse} as
$
\qVscrH:=\qspan\{\qxi_\qj\qvarphi^\qj_\qi\, |\, ~~\qj=1,\ldots,\qnbx \text{ and } \qi=1,\ldots,\qL_\qj\}
$
\item \textit{Multiscale spectral coarse space}, defined in \eqref{eq:coarse_space_tilde} (used only in the scalar elliptic case, see Section~\ref{sec:dim_coarse}).
\end{enumerate}

In all numerical examples (unless stated otherwise), the threshold for taking into account eigenpairs for the construction of the coarse space is chosen to be $0.5$, i.e., $1/\qtaulambda = 0.5$. Note that in the finite dimensional case (A5) is satisfied for any choice of the threshold $1/\qtaulambda$. However, in practice one is generally interested in choosing $1/\qtaulambda$ not too large to avoid an unnecessarily large dimension of the coarse space.

\begin{figure}[h!]
\includegraphics[width=.4\textwidth]{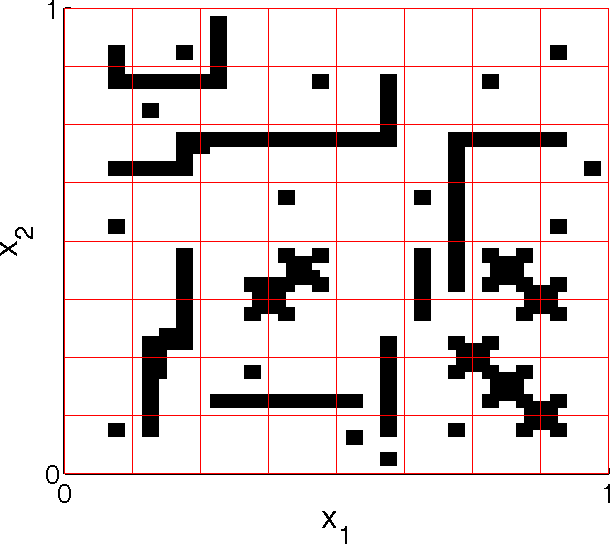} \hspace{1cm}
\includegraphics[width=.4\textwidth]{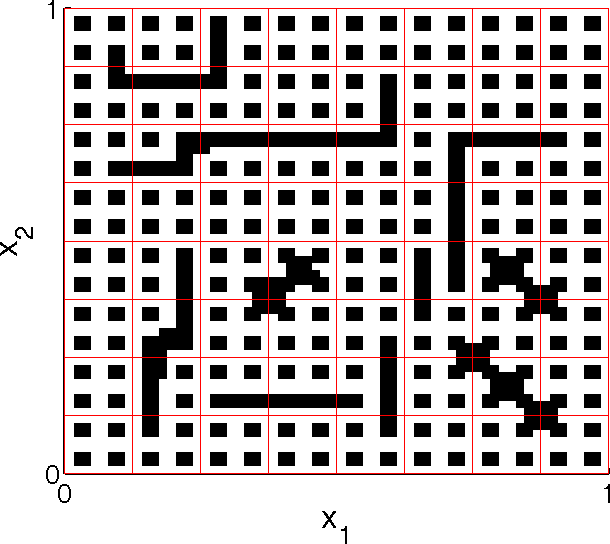}
\caption{Two sample geometries: Geometry 1 (left) and Geometry 2 (right); the regions of low (white) and high (black) values of the coefficients. The mesh indicates the coarse triangulation.}
\label{fig:geometry}
\end{figure}
We consider Geometries~$1-4$ (see Figures~\ref{fig:geometry} and \ref{fig:geom_256}), where for Geometries $1-3$
$\qkappa$ is equal to $\qkappamin$ and $\qkappamax$ in the white and black regions, respectively,
and for Geometry~$4$ $\qkappa$ is given as shown in the logarithmic plot of Figure~\ref{fig:geom_256}(b).

The geometries shown in Figure~\ref{fig:geometry} differ by the number of connected subregions with high permeability.  
The goal for these two different distributions of the high contrast is to (1) test the robustness of the developed preconditioners with respect to the contrast and (2) show the benefits of the multiscale coarse space in the case of a large number of not connected, isolated, inclusions with high conductivity.

In the abstract setting we replace the spaces $\qVscr $,  $\qVscr_0$, and $\qVscrH$  by their finite element counterparts. For this, we use the fine grid which is obtained from the coarse grid by subdividing the coarse grid elements into a number of finer elements. For 
Geometry~1 and 2, we make an initial $8 \times 8$ mesh and introduce in each rectangular element an 
$8 \times 8$ fine mesh, denoted by ${\mathcal T}_h$. Then, the spaces  $\qVscr_0$, $\qVscr_0(\qOmega_\qj)$ and  $\qVscr(\qOmega_\qj)$ are finite element spaces corresponding to this mesh for a specific finite element, which  needs to be chosen appropriately for the problem under investigation, e.g., Lagrange finite elements for the scalar elliptic problem in Galerkin formulation. In order not to overburden the notations, we have omitted the dependence of the spaces upon the fine-grid mesh size hoping that this will not lead to a confusion.

Unfortunately, such choice of the spaces does not satisfy assumption (A4), since generally the product $\xi_j v$ of a standard partition of unity function and a finite element function does not belong to the finite element space $\qVscr$. There are two ways to overcome this problem: (1) to project $\xi_j v$  back to the finite element space using the form $a(\cdot, \cdot)$ or (2) to use the finite element interpolant of $\xi_j v$ in the finite element space. The projection is a local operation, but
involves inverting some local stiffness matrices and could be unnecessarily expensive. The interpolation option, which was used in our computations, is straightforward to implement, but is not covered immediately by the abstract setting developed above. However, a perturbation analysis could show that this is a viable practical approach whose rigorous study is a subject of our future research.

\subsection{Numerical Experiments for Geometry 1}

In this subsection we use the standard coarse space $ \qVscrH^{st}$ and the spectral coarse space $\qVscrH$, defined in \eqref{eq:coarse}.

\subsubsection{Scalar Elliptic Problem -- Galerkin Formulation (see Section~\ref{sec:ScalarElliptic})}
Here, the finite element space is the space of Lagrange finite elements of degree $1$. The right hand side $\qf$ in \eqref{eq:scalar_elliptic} is chosen to compensate for the boundary condition of linear temperature drop in $\qx$-direction, i.e., $\qphi(\qbx)=1-\qx_1$ on $\qdOmega$.
The dimension of the fine-grid space is $4225$. In the PCG method we iterate to achieve a relative reduction of the preconditioned residual of $1e-6$.
In Tables \ref{tab:scalar_elliptic_simple} and \ref{tab:scalar_elliptic_var_thresh} we present the results of two kinds of numerical experiments on the problem described in Subsection~\ref{sec:ScalarElliptic} for Geometry~1 with contrast $\qkappamax/\qkappamin$ increasing from $1e2$ to $1e6$. As partition of unity $\{\qxi_\qj\}_{\qj=1}^{\qnbx}$, we use Lagrange finite element functions of degree 1 corresponding to the coarse mesh $\qTcalH$.

In Table~\ref{tab:scalar_elliptic_simple} we compare the number of PCG-iterations and the condition numbers for two preconditioners based on the standard coarse space $\qVscrH^{st}$ (consisting only of the coarse Lagrange finite element functions) and the spectral coarse spaces $\qVscrtH$ generated by our method, respectively. 
The standard coarse space has fixed dimension $49$. The method performs well for low contrasts, but the condition number of the preconditioned systems as well as the number of iterations grow with increasing contrast. The spectral coarse space keeps the condition number independent of the contrast, which is in agreement with our theory. 

It seems that the results in the number of iterations for the space $\qVscrH^{st}$ in the last row in Table~\ref{tab:scalar_elliptic_simple} deviates from the general trend. We note that for all cases we run the PCG-method with the same stopping criterion, i.e., reduction of the initial preconditioned residual by a factor of $1e-6$. However, in this case the condition number of the preconditioned system is $ 1.77e5$. Therefore, after reducing the initial preconditioned residual by a factor of $1e-6$ 
we may still be far away from the solution. 
Apparently, for larger condition numbers we 
may need many more iterations to compute the solution accurately.


\begin{table}[h!]
\begin{tabular}{V{2.5} c V{2.5} c| c| c V{2.5} c |  c|c V{2.5}}\clineB{2-7}{2.5}
\multicolumn{1}{ c V{2.5} }{} &  \multicolumn{3}{ c V{2.5}}{Standard coarse space $\qVscrH^{st}$} &  \multicolumn{3}{c V{2.5} }{Spectral coarse space $\qVscrH$} 
\\ \hlineB{2.5}
$\qkappamax/\qkappamin$ & \# iter. & dim $\qVscrH^{st}$ & cond.\ num.& \# iter. & dim. $\qVscrH$ & cond.\ num.\ \\ \hlineB{2.5}
$1e2$ & 29 &  49 &  2.29e1 & $25$  &  $76$ & $15.59$ \\ \hline
$1e3$ & 50 &  49 &  1.88e2 & $21$  & $145$ & $11.51$ \\ \hline
$1e4$ & 55 &  49 &  1.79e3 & $18$  & $162$ & $6.20$ \\ \hline
$1e5$ & 67 &  49 &  1.78e4 & $18$  & $162$ & $6.18$ \\ \hline
$1e6$ & 66 &  49 &  1.77e5 & $19$  & $162$ & $6.19$ \\ \hlineB{2.5}
\end{tabular}\\[2ex] 
\caption{Elliptic Problem of Second Order: Numerical results for standard and spectral coarse spaces}
\label{tab:scalar_elliptic_simple}
\end{table}

In Table~\ref{tab:scalar_elliptic_var_thresh} we show the number of PCG-iterations and condition numbers for two preconditioners based on spectral coarse spaces. In columns $2 - 4$ we report the results for a coarse space of fixed dimension $162$ and the threshold for which this is achieved. 
In columns $5 - 7$ we present the results for a fixed threshold $1/\tau_\lambda=0.5$. We note that the difference in the performance is only for values of the contrast below $1e4$.

\begin{table}
\begin{tabular}
{V{2.5} c V{2.5} c| c|c  V{2.5} c | c | c  V{2.5}} \clineB{2-7}{2.5}
\multicolumn{1}{  c V{2.5}}{}  & \multicolumn{3}{| c V{2.5}}{Spectral coarse space of dim $\qVscrH=162$} & \multicolumn{3}{ c V{2.5}}{Spectral coarse space, $1/\tau_\lambda=0.5$} \\ \hlineB{2.5}
$\qkappamax/\qkappamin$ & \# iter. & cond. & $1/\tau_\lambda$ & \# iter  & cond. & dim $\qVscrH$ \\ \hlineB{2.5}
$1e2$ & $18$ & $7.45$ & $1.39$  & 25 & 15.6 & 76  \\ \hline
$1e3$ & $17$ & $5.99$ & $0.92$  & 21 & 11.5 & 145 \\ \hline
$1e4$ & $18$ & $6.20$ & $0.50$  & 18 & 6.20 & 162 \\ \hline
$1e5$ & $18$ & $6.18$ & $0.50$  & 18 & 6.18 & 162 \\ \hline
$1e6$ & $19$ & $6.19$ & $0.50$  & 19 & 6.19 & 162 \\ 
\hlineB{2.5}
\end{tabular}\\[2ex] 
\caption{Elliptic Problem of Second Order: Numerical results for spectral coarse spaces $\qVscrH$ of fixed dimension and fixed threshold $1/\tau_\lambda=0.5$.}
\label{tab:scalar_elliptic_var_thresh}
\end{table}

\subsubsection{Scalar Elliptic Problem -- Mixed Formulation (see Section~\ref{sec:Darcy})}
\label{sec:Darcy_numeric}
Here, the finite element space is the Raviart-Thomas space of degree $0$ ($RT0$) for the velocity and piecewise constants for the pressure on the same rectangular fine mesh as above. The right hand side $\qbf$ in \eqref{eq:Darcy} is chosen to compensate for the boundary condition of unit flow in $\qx$-direction, i.e., $\qbu\cdot\qbn = \qbe_1 \cdot \qbn$ on $\qdOmega$, where $\qbe_1$ is the first Cartesian unit vector. The (divergence free) coarse velocity space is constructed as outlined in Remark~\ref{rem:stab_decom_mixed}. We first construct a basis of the spectral coarse space corresponding to the stream function space. The corresponding coarse velocity space is then given by the span of the curl of these basis functions. Note, that the stream function space corresponding to $RT0$ is given by the space of Lagrange polynomials of degree $1$ (see \cite[Section~4.4]{GR86}).

As partition of unity $\{\qxi_\qj\}_{\qj=1}^{\qnbx}$ we could simply use the bilinear Lagrange basis functions corresponding to the coarse mesh $\qTcalH$. Nevertheless, for consistency with the Brinkman case (see Section~\ref{sec:Brinkman_numeric}), where we have higher regularity requirements, we choose the $\qxi_\qj$'s to be piecewise polynomials of degree 3, such that all first derivatives and the lowest mixed derivatives are continuous  and $\qxi_\qj(\qbx_\qi)=\delta_{i,j}$ for $\qi,\qj=1,\ldots,\qnbx$.

In Table~\ref{tab:scalar_Darcy} we present the numerical results for this problem and Geometry~1 (see Figure~\ref{fig:geometry}).
The dimension of the fine space is 12416.
In columns $2 - 4$ we report the number of iterations, the size of the standard coarse space, 
and the condition number of the preconditioned system. Here, the standard coarse (velocity) space is given by the span of the curl of the partition of unity functions corresponding to interior coarse mesh nodes. Columns $5 - 7$ contain the number of iterations, the dimension of the coarse space, as well as the condition number of the preconditioned system. It is clear that for the standard coarse space of dimension $49$ 
the condition number grows with increasing the contrast and so does the number of iterations. However, when the coarse space includes all coarse eigenfunctions below the threshold $1/\tau_\lambda=0.5$, the preconditioner shows convergence rates and condition numbers independent of the contrast.


\begin{table}
\begin{tabular}{V{2.5} c V{2.5} c| c| c V{2.5} c |  c|c V{2.5} }\clineB{2-7}{2.5}
\multicolumn{1}{ c V{2.5}}{} &  \multicolumn{3}{ c V{2.5}}{Standard coarse space $\qVscrH^{st}$} &  \multicolumn{3}{c V{2.5}}{Spectral coarse space $\qVscrH$} 
\\ \hlineB{2.5}
$\qkappamax/\qkappamin$ & \# iter. & dim $\qVscrH^{st}$ & cond.\ num.& \# iter. & dim $\qVscrH$ & cond.\ num.\ \\ \hlineB{2.5}
$1e2$ & 32 &  49 &  2.87e1 & $23$  &  $86$ & $13.87$ \\ \hline
$1e3$ & 50 &  49 &  2.26e2 & $24$  & $129$ & $18.38$ \\ \hline
$1e4$ & 63 &  49 &  2.19e3 & $17$  & $162$ & $6.57$ \\ \hline
$1e5$ & 80 &  49 &  2.18e4 & $18$  & $162$ & $6.65$ \\ \hline
$1e6$ & 87 &  49 &  2.13e5 & $19$  & $162$ & $6.68$ \\ \hlineB{2.5}
\end{tabular}\\[2ex] 
\caption{Scalar elliptic equation in mixed formulation: numerical results for Standard coarse space $\qVscrH^{st}$ and spectral coarse spaces $\qVscrH$ obtained for a fixed threshold $1/\tau_\lambda=0.5$.}
\label{tab:scalar_Darcy}
\end{table}

\subsubsection{Brinkman Problem (see Section~\ref{sec:Brinkman})}
\label{sec:Brinkman_numeric}
Next, we present the numerical experiments for the Brinkman problem \eqref{eq:Brinkman}, where the right hand side $\qbf$ is chosen to compensate for the boundary condition of unit flow in $\qx$-direction, i.e., $\qbu = \qbe_1$ on $\qdOmega$. The viscosity $\qmu$ is chosen to be $0.01$ and $\qkappa$ varies depending on the contrast (see Table~\ref{tab:Brinkman}).

We discretize this problem with an $H(div)$-conforming Discontinuous Galerkin discretization (cf.\ \cite{WY07,Wil09}) using Raviart-Thomas finite elements of degree 1 (RT1). We again employ a $64\times64$ fine grid. It is well-known (see \cite[Section~4.4]{GR86}) that in two spatial dimensions the stream function space corresponding to the RT1 space is given by Lagrange biquadratic finite elements. For a generalization to three spatial dimensions one has to utilize N\'ed\'elec elements of appropriate degree. As above, we use an $8\times8$ coarse mesh. We choose $\{\qxi_\qj\}_{\qj=1}^{\qnbx}$ as described in Section~\ref{sec:Darcy_numeric}, which satisfies all regularity constraints.

In Table \ref{tab:Brinkman} we give the number of iterations, the dimension of the coarse space in the additive Schwarz preconditioner, as well as the estimated condition number of the preconditioned system. The dimension of the fine space is 49408. As for the scalar elliptic case in mixed formulation, the coarse (divergence free) velocity space is constructed as outlined in Remark~\ref{rem:stab_decom_mixed}. In columns $2 - 5$ we present the results for the case of the standard coarse space of dimension $49$, which as in Section~\ref{sec:Darcy_numeric} is given by the span of the curl of the partition of unity functions corresponding to interior coarse mesh nodes.

We observe, that the increase in the contrast leads to an increase in the condition number and subsequently to an increase of the number of iterations. Further, in columns $5 - 7$ we report the number of iterations, the dimension of the coarse space in the additive Schwarz preconditioner, as well as the estimated condition number of the preconditioned system for the spectral coarse space obtained by a fixed threshold $1/\tau_\lambda=0.5$. For the Brinkman problem the performance of the preconditioner is  also robust. We should note however, that Brinkman's equation is much more difficult to solve due to the fact that the overall system of linear equations is a saddle point problem.

\begin{table}
\begin{tabular}{V{2.5} c V{2.5} c| c| c V{2.5} c |  c| c V{2.5} }\clineB{2-7}{2.5}
\multicolumn{1}{ c V{2.5}}{} &  \multicolumn{3}{ c V{2.5}}{Standard coarse space \mbox{$\qVscrH^{st}$}} &  \multicolumn{3}{c V{2.5}}{spectral coarse space \mbox{$\qVscrH$}}
\\[1ex] \hlineB{2.5}
$\qkappamax/\qkappamin$ & \# iter. & dim. \mbox{$\qVscrH^{st}$} & cond.\ num.& \# iter. & dim. \mbox{$\qVscrH$} & cond.\ num.\ \\ \hlineB{2.5}
$1e2$ & 27 &  49 &  2.13e1 & $25$  &  $60$ & $14.69$ \\ \hline
$1e3$ & 39 &  49 &  4.22e2 & $28$  & $ 75$ & $21.73$ \\ \hline
$1e4$ & 70 &  49 &  2.25e3 & $29$  & $106$ & $21.83$ \\ \hline
$1e5$ & 91 &  49 &  1.51e4 & $24$  & $153$ & $13.08$ \\ \hline
$1e6$ & 113 & 49 &  1.24e5 & $22$  & $164$ & $13.82$ \\ \hlineB{2.5}
\end{tabular}\\[2ex] 
\caption{Numerical results for Brinkman's equation using standard coarse space $\qVscrH^{st}$ and spectral coarse spaces $\qVscrH$.}
\label{tab:Brinkman}
\end{table}


\subsection{Numerical experiments for Geometry 2 in Figure \ref{fig:geometry}}

These numerical experiments are aimed to compare the performance of the iterative method applied to the {\it second order elliptic problem} in Galerkin formulation (see Section~\ref{sec:ScalarElliptic}) for a permeability given in Geometry 2 (see Figure~\ref{fig:geometry}). 
The goal here is to demonstrate the  coarse space  dimension reduction when 
using multiscale partition of unity functions instead of 
standard ones.  The dimension of the fine-grid space is 4225. 

In Table~\ref{tab:scalar_elliptic} we present the results when the preconditioner is based on the spectral coarse space $\qVscrH$ (columns $ 2 - 4$), the multiscale spectral coarse space $ \qVscrtH$ (columns $5 - 7$), and the multiscale coarse space $ \qVscrH^{ms}$ (columns $8 - 10$).
Comparing the data for the spaces $\qVscrH$ and $\widetilde \qVscrH$ shows that  the number of PCG-iterations and the estimated condition number of the preconditioned system are robust with respect to the contrast $\qkappamax/\qkappamin$. 
We can also observe that when using the spectral coarse space $ \qVscrH$ the dimension of the coarse space increases as the contrast increases $\qkappamax/\qkappamin$, which is in agreement with the analysis of Section \ref{sec:ScalarElliptic}. The decrease in the estimated condition number when going from $\qkappamax/\qkappamin=1e2$ to $\qkappamax/\qkappamin=1e3$ and further to $\qkappamax/\qkappamin=1e4$ can be explained by the fact that for higher contrasts more eigenvalues are below the prescribed threshold, yielding a higher dimensional coarse space and a lower condition number. However, it is important to note that the dimension of the coarse space reaches a maximum for $\qkappamax/\qkappamin$ above a certain threshold. As we can see 
for $\qkappamax/\qkappamin$ in the range $1e4,\,\ldots,\, 1e6$, the dimension of the coarse space stays the same. By the analysis in Section \ref{sec:ScalarElliptic} we know that there is only a finite number of asymptotically small (with the contrast tending to infinity) generalized eigenvalues. The reported data can be seen as evidence that for this specific configuration we have reached this asymptotic regime for $\qkappamax/\qkappamin=1e4$.

\begin{table}
\begin{tabular}{V{2.5} c V{2.5} c| c| c V{2.5} c |  c |c V{2.5} c |  c |c V{2.5} }\clineB{2-10}{2.5}
\multicolumn{1}{ c V{2.5}}{} &  \multicolumn{3}{ c V{2.5}}{ \mbox{$\qVscrH$}} &  \multicolumn{3}{ c V{2.5}}{\ $\qVscrtH$} &  \multicolumn{3}{ c V{2.5}}{ \mbox{$\qVscrH^{ms}$}}  \\ \hlineB{2.5}
$\frac{\qkappamax}{\qkappamin}$ & \# iter. & dim $\qVscrH$ & cond.\ \#& \# iter. & dim \mbox{$\qVscrtH$} & cond.\ \#\ &  \# iter& dim $\qVscrH^{ms}$ & cond \#  \\ \hlineB{2.5}
$1e2$ & 22 &  163 & 12.15 & $21$  &  $44$ & $10.81$ & 19 &  49 & 8.70\\ \hline
$1e3$ & 18 &  612 &  8.42 & $20$  & $ 60$ & $9.86$  & 35 &  49 & 5.97e1 \\ \hline
$1e4$ & 15 &  838 &  4.92 & $22$  &  $60$ & $10.90$ & 44 &  49 & 5.63e2 \\ \hline
$1e5$ & 16 &  838 &  4.92 & $22$  & $60$ & $11.01$  & 53 &  49 & 5.59e3 \\ \hline
$1e6$ & 17 &  838 &  4.92 & $22$  & $60$ & $11.01$  & 66 &  49 & 5.59e4 \\ \hlineB{2.5}
\end{tabular}\\[2ex] 
\caption{Scalar elliptic -- Galerkin formulation: results for spectral coarse space $\qVscrH$, multiscale spectral coarse space $\widetilde \qVscrH$, and multiscale coarse space $ \qVscrH^{ms}$.}
\label{tab:scalar_elliptic}
\end{table}

In columns $8 - 10$ we present the numerical results of the algorithm when the preconditioner is based on the
 multiscale coarse space $\qVscrH^{ms}$, 
which  consist of one 
basis function per interior coarse node.
As we can see from the data, 
the number of PCG-iterations as well as the condition number of the preconditioned system grow steadily with the growth of the contrast. 

The important point to observe when using the multiscale spectral coarse space $\qVscrtH$ (see columns $5-7$ of Table~\ref{tab:scalar_elliptic}) is that its dimension is drastically reduced compared to the spectral coarse space $\qVscrH$. In our specific example, the largest dimension of the multiscale spectral coarse space $\widetilde \qVscrH$, which is constructed using the multiscale partition of unity $\{\qxit_\qj\}_{\qj=1}^{\qnbx}$, is $60$, compared to the dimension $838$ of the spectral coarse space $\qVscrH$, which is based on the standard partition of 
unity $\{\qxi_\qj\}_{\qj=1}^{\qnbx}$. 
0ne is generally interested in keeping 
the dimension of the coarse space 
as small as possible, especially when 
the problem is solved multiple times.
The data is a confirmation of our reasoning in Section~\ref{sec:dim_coarse}.

\subsection{Numerical experiments for Geometries 3 and 4} 
In Table~\ref{tab:scalar_elliptic_mult_256} we present the numerical results for the scalar elliptic equation of 
second order in Galerkin formulation 
from Section \ref{sec:ScalarElliptic} for 
highly heterogeneous permeability 
distributions shown in Figure~\ref{fig:geom_256}. Geometry~3 represents a rather challenging example: the permeability field is highly heterogeneous with more than 4000 small and about 100 large randomly distributed inclusions. We consider this a challenging  test for the robustness of the iterative method by performing a relatively small number of iterations using a coarse space of low dimension.
Here, we have used  a  $16 \times 16$ coarse mesh  and subdivided each coarse cell into  $16 \times 16$ subcells to obtain a $256 \times 256$ fine mesh. The preconditioner is based on the multi-scale spectral coarse space $\widetilde \qVscrH$.  The dimension of the fine space is 66049 while dimension of the coarse space is at most 293.  As we can see, the condition number of the preconditioned system is robust with respect to the contrast and the dimension of the coarse space is quite small. 

Geometry~4 (see Figure~\ref{fig:geom_256}) represents a more challenging problem 
in that it is no longer  a binary medium, i.e., $\qkappa$ assumes many and not just two extreme values. The geometry is generated by setting $\qkappa$ in a fine mesh cell to $10^{\qgamma\, \text{rand}}$, where $\text{rand}$ denotes a uniformly distributed random number and $\qgamma=2,\ldots, 6$. 
This  produces a random field $10^{\qgamma\, \eta(\qbx)}$ where
$\eta(\qbx)$ is a realization of a  
spatially uncorrelated random field. 
This yields a ``background'' on top of which we put randomly generated inclusions similar to Geometry~3. In Table~\ref{tab:scalar_elliptic_mult_256} (columns $5-7$) we report the numerical results using the preconditioner based on the multiscale spectral space $\qVscrtH$. As we can see, the number of PCG iterations and the condition number of the preconditioned system are robust with respect to increases in the contrast. It is furthermore important to note that, even for this random case, the dimension of the coarse space stays reasonably small (at most 397) compared to the dimension of the fine space, i.e., 66049. This exemplifies the robustness and applicability of the numerical method developed above.

\begin{figure}
\subfigure[Geometry 3: periodic background and randomly distributed inclusions.]{
\includegraphics[width=.34\textwidth]{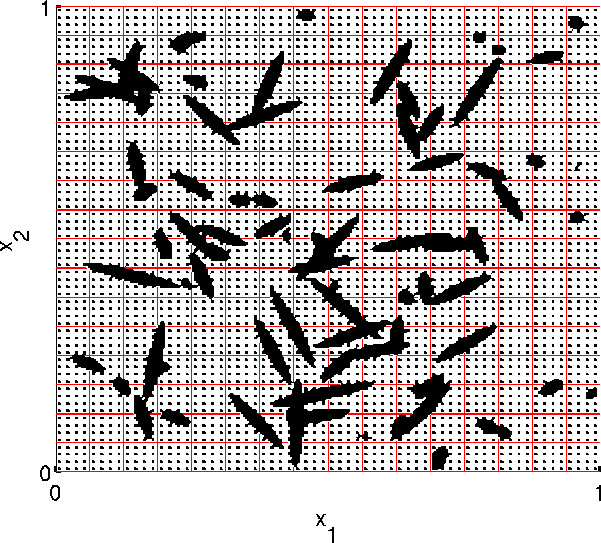}
}
\hspace{1cm}
\subfigure[Geometry 4: random background and randomly distributed inclusions -- logarithmic plot of $\qkappa$.]{
\includegraphics[width=.4\textwidth]{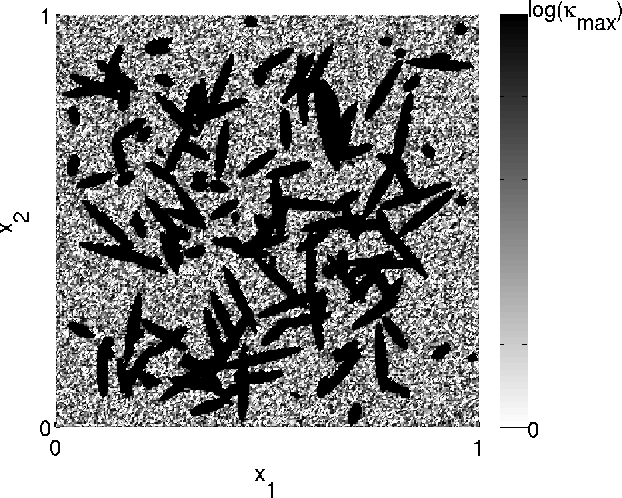}
}
\caption{Two geometries with a $256\times 256$ fine mesh and a $16\times 16$ coarse mesh.
}
\label{fig:geom_256}
\end{figure}

\vspace{1cm}
\begin{table}
\begin{tabular}{V{2.5} c V{2.5} c| c|c  V{2.5} c | c | c  V{2.5}} \clineB{2-7}{2.5}
\multicolumn{1}{ c V{2.5}}{}  & \multicolumn{3}{| c V{2.5}}{Geometry 3} & \multicolumn{3}{ c V{2.5}}{Geometry 4} \\ \hlineB{2.5} 
$\qkappamax/ \qkappamin$ & \# iter.\ & dim $\qVscrtH$ & cond.\ \# & \# iter.\ & dim $\qVscrtH$ & cond.\ \#\\ \hlineB{2.5}
$1e2$ & $22$ & $209$ & $11.2$ & $19$ & $217$ & $8.47$\\ \hline
$1e3$ & $24$ & $259$ & $15.2$ & $20$ & $221$ & $9.48$\\ \hline
$1e4$ & $23$ & $275$ & $11.2$ & $22$ & $244$ & $11.5$\\ \hline
$1e5$ & $24$ & $277$ & $11.2$ & $25$ & $317$ & $16.3$\\ \hline
$1e6$ & $27$ & $293$ & $11.7$ & $23$ & $397$ & $11.7$\\ \hlineB{2.5}
\end{tabular}\\[2ex] 
\caption{Scalar elliptic problem -- Galerkin formulation: Numerical results for permeability fields shown in Figure~\ref{fig:geom_256} using multiscale spectral coarse spaces $\qVscrtH$.}
\label{tab:scalar_elliptic_mult_256}
\end{table}

\section{Conclusions}
The theory developed above introduces a method for constructing stable decompositions with respect to symmetric positive definite operators. The robustness with respect to problem and mesh parameters is proved under rather general assumptions. We have furthermore applied this abstract framework to several important cases, i.e., the scalar elliptic equation in Galerkin and mixed formulation, Stokes' equations, and Brinkman's equations. For the scalar elliptic equation in Galerkin formulation, we have additionally presented a strategy of reducing the dimension of the coarse space in the stable decomposition. To verify our analytical results, we have performed several numerical experiments, which are in coherence with our theory and show the usefulness of the method.

\section*{Acknowledgments}
The research of Y. Efendiev, J. Galvis, and R.~Lazarov was supported in parts by award KUS-C1-016-04, made by King Abdullah University of Science and Technology (KAUST). The research of R. Lazarov and J. Willems was supported in parts by NSF Grant DMS-1016525.

\bibliographystyle{plain}
\bibliography{references,referencesJuan,bib_raytcho}

\begin{thebibliography}{10}

\bibitem{Ada78}
R.A. Adams.
\newblock {\em Sobolev Spaces}.
\newblock Pure and Applied Mathematics. Academic Press, Inc, 1st edition, 1978.

\bibitem{BHK07}
W.~Bangerth, R.~Hartmann, and G.~Kanschat.
\newblock {deal.II} -- a general purpose object oriented finite element
  library.
\newblock {\em ACM Trans. Math. Softw.}, 33(4):24/1--24/27, 2007.

\bibitem{Bra93}
J.H. Bramble.
\newblock {\em Multigrid {M}ethods}.
\newblock Longman Scientific\&Technical, Essex, 1st edition, 1993.

\bibitem{BS02}
S.C. Brenner and L.R. Scott.
\newblock {\em The {M}athematical {T}heory of {F}inite {E}lement {M}ethods}.
\newblock Springer, 2nd edition, 2002.

\bibitem{Bri47_1}
H.C. Brinkman.
\newblock A calculation of the viscouse force exerted by a flowing fluid on a
  dense swarm of particles.
\newblock {\em Appl. Sci. Res.}, A1:27--34, 1947.

\bibitem{VCF03}
T.~Chartier, R.D. Falgout, V.E. Henson, J.~Jones, T.~Manteuffel, S.~McCormick,
  J.~Ruge, and P.S. Vassilevski.
\newblock {Spectral AMGe ($\rho$AMGe).}
\newblock {\em SIAM J. Sci. Comput.}, 25(1):1--26, 2003.

\bibitem{SarkisDryja96}
Maksymilian Dryja, Marcus~V. Sarkis, and Olof~B. Widlund.
\newblock Multilevel {S}chwarz methods for elliptic problems with discontinuous
  coefficients in three dimensions.
\newblock {\em Numer. Math.}, 72(3):313--348, 1996.

\bibitem{EfendievBook}
Yalchin Efendiev and Thomas~Y. Hou.
\newblock {\em Multiscale finite element methods}, volume~4 of {\em Surveys and
  Tutorials in the Applied Mathematical Sciences}.
\newblock Springer, New York, 2009.
\newblock Theory and applications.

\bibitem{EILRW08}
R.E. Ewing, O.~Iliev, R.D. Lazarov, I.~Rybak, and J.~Willems.
\newblock A simplified method for upscaling composite materials with high
  contrast of the conductivity.
\newblock {\em SIAM Journal on Scientific Computing}, 31(4):2568--2586, 2009.

\bibitem{EG2}
Juan Galvis and Yalchin Efendiev.
\newblock Domain decomposition preconditioners for multiscale flows in
  high-contrast media.
\newblock {\em Multiscale Model. Simul.}, 8(4):1461--1483, 2010.

\bibitem{EG1}
Juan Galvis and Yalchin Efendiev.
\newblock Domain decomposition preconditioners for multiscale flows in high
  contrast media: reduced dimension coarse spaces.
\newblock {\em Multiscale Model. Simul.}, 8(5):1621--1644, 2010.

\bibitem{Girault-R-1979}
V.~Girault and P.-A. Raviart.
\newblock {\em Finite element approximation of the {N}avier-{S}tokes
  equations}, volume 749 of {\em Lecture Notes in Mathematics}.
\newblock Springer-Verlag, Berlin, 1979.

\bibitem{GR86}
Vivette Girault and Pierre-Arnaud Raviart.
\newblock {\em Finite element methods for {N}avier-{S}tokes equations},
  volume~5 of {\em Springer Series in Computational Mathematics}.
\newblock Springer-Verlag, Berlin, 1986.
\newblock Theory and algorithms.

\bibitem{IvanRob}
I.G. Graham, P.O. Lechner, and R.~Scheichl.
\newblock Domain decomposition for multiscale {PDE}s.
\newblock {\em Numer. Math.}, 106(4):589--626, 2007.

\bibitem{Grisvard85}
P.~Grisvard.
\newblock {\em Elliptic problems in nonsmooth domains}, volume~24 of {\em
  Monographs and Studies in Mathematics}.
\newblock Pitman (Advanced Publishing Program), Boston, MA, 1985.

\bibitem{Hac03}
W.~Hackbusch.
\newblock {\em Multi-{G}rid {M}ethods and {A}pplications}.
\newblock Springer Series in Computational Mathematics. Springer, Berlin, 2nd
  edition, 2003.

\bibitem{HouWuCai}
Thomas~Y. Hou, Xiao-Hui Wu, and Zhiqiang Cai.
\newblock Convergence of a multiscale finite element method for elliptic
  problems with rapidly oscillating coefficients.
\newblock {\em Math. Comp.}, 68(227):913--943, 1999.

\bibitem{DryjaWidlund02}
Axel Klawonn, Olof~B. Widlund, and Maksymilian Dryja.
\newblock Dual-primal {FETI} methods for three-dimensional elliptic problems
  with heterogeneous coefficients.
\newblock {\em SIAM J. Numer. Anal.}, 40(1):159--179 (electronic), 2002.

\bibitem{MandelBrezina96}
Jan Mandel and Marian Brezina.
\newblock Balancing domain decomposition for problems with large jumps in
  coefficients.
\newblock {\em Math. Comp.}, 65(216):1387--1401, 1996.

\bibitem{Mat08}
T.P.A. Mathew.
\newblock {\em Domain {D}ecomposition {M}ethods for the {N}umerical {S}olution
  of {P}artial {D}ifferential {E}quations}.
\newblock Lecture Notes in Computational Science and Engineering. Springer,
  Berlin Heidelberg, 2008.

\bibitem{Nepom-91}
S.V. Nepomnyaschikh.
\newblock Mesh theorems on traces, normalizations of function traces and their
  inversion.
\newblock {\em Sov. J. Numer. Anal. Math. Modelling}, 6(2):151--168, 1991.

\bibitem{RobClemens2}
C.~Pechstein and R.~Scheichl.
\newblock Analysis of {FETI} methods for multiscale {PDE}s - part {II}:
  interface variation.
\newblock {\em To appear in Numer. Math.}

\bibitem{RobClemens1}
Clemens Pechstein and Robert Scheichl.
\newblock Analysis of {FETI} methods for multiscale {PDE}s.
\newblock {\em Numer. Math.}, 111(2):293--333, 2008.

\bibitem{Reed_Simon}
M.~Reed and B.~Simon.
\newblock {\em Methods of Modern Mathematical Physics IV: Analysis of
  Operators}.
\newblock Academic Press, New York, 1978.

\bibitem{SarkisEnriched}
Marcus Sarkis.
\newblock Nonstandard coarse spaces and {S}chwarz methods for elliptic problems
  with discontinuous coefficients using non-conforming elements.
\newblock {\em Numer. Math.}, 77(3):383--406, 1997.

\bibitem{SarkisThesis}
Marcus~V. Sarkis.
\newblock {\em {S}chwarz Preconditioners for Elliptic Problems with
  Discontinuous Coefficients Using Conforming and Non-Conforming Elements}.
\newblock PhD thesis, Courant Institute, New York University, September 1994.

\bibitem{SBG96}
B.F. Smith, P.E. Bj{\o}rstad, and W.D. Gropp.
\newblock {\em Domain {D}ecomposition. {P}arallel {M}ultilevel {M}ethods for
  {E}lliptic {P}artial {D}ifferential {E}quations.}
\newblock Cambridge: Cambridge University Press, 1st edition, 1996.

\bibitem{TW05}
A.~Toselli and O.~Widlund.
\newblock {\em Domain {D}ecomposition {M}ethods -- {A}lgorithms and {T}heory}.
\newblock Springer Series in Computational Mathematics. Springer, 2005.

\bibitem{IvanRobEnergyMin}
J.~Van~lent, R.~Scheichl, and I.G. Graham.
\newblock Energy-minimizing coarse spaces for two-level {S}chwarz methods for
  multiscale {PDE}s.
\newblock {\em Numer. Linear Algebra Appl.}, 16(10):775--799, 2009.

\bibitem{Vassilevski_book_08}
P.S. Vassilevski.
\newblock {\em Multilevel block-factrorization preconditioners. Matrix-based
  analysis and algorithms for solving finite element equations}.
\newblock Springer-Verlag, New York, 2008.

\bibitem{WY07}
J.~Wang and X.~Ye.
\newblock New finite element methods in computational fluid dynamics by
  {H}(div) elements.
\newblock {\em SIAM J. Numer. Anal.}, 45(3):1269--1286, 2007.

\bibitem{Wil09}
J.~Willems.
\newblock {\em {N}umerical {U}pscaling for {M}ultiscale {F}low {P}roblems}.
\newblock PhD thesis, University of Kaiserslautern, 2009.

\bibitem{Ludmil}
J.~Xu and L.T. Zikatanov.
\newblock On an energy minimizing basis for algebraic multigrid methods.
\newblock {\em Comput. Vis. Sci.}, 7(3-4):121--127, 2004.

\end{thebibliography}

\end{document}